\newcommand{\CompleteSetsOf}[1]{\mathcal{C}_{#1}}
\newcommand{\CompleteSetsOfG}{\CompleteSetsOf{G}}
\newcommand{\CliquesOf}[1]{\mathcal{K}_{#1}}
\newcommand{\CliquesOfG}{\CliquesOf{G}}
\newcommand{\CliqueGraphOf}[1]{\mathbf{K}_{#1}}
\newcommand{\CliqueGraphOfG}{\CliqueGraphOf{G}}
\newcommand{\CliqueTreesOf}[1]{\mathcal{C}\!\mathcal{T}_{#1}}
\newcommand{\CliqueTreesOfG}{\CliqueTreesOf{G}}
\newcommand{\ReducedCliqueGraphOf}[1]{\mathbf{R}_{#1}}
\newcommand{\ReducedCliqueGraphOfG}{\ReducedCliqueGraphOf{G}}
\newcommand{\CFGeneratedBy}[1]{{F(#1)}}
\newcommand{\MaxGeneratorOf}[1]{C(#1)}
\newcommand{\CliqueFamiliesOf}[1]{\mathcal{F}_{#1}}
\newcommand{\CliqueFamiliesOfG}{\CliqueFamiliesOf{G}}
\newcommand{\StrictSubfamilyGraphOf}[1]{\Gamma_{#1}}
\newcommand{\StrictSubfamilyGraphOfCF}{\StrictSubfamilyGraphOf{F}}
\newcommand{\MaxSubfamiliesOf}[1]{\mathcal{M}_{#1}}
\newcommand{\MaxSubfamiliesOfF}{\MaxSubfamiliesOf{F}}
\newcommand{\CFEquivalenceRelation}{\sim_{F}}
\newcommand{\CFEquivalenceClassOf}[1]{[#1]_{\CFEquivalenceRelation}}
\newcommand{\SameFamilyGraphOf}[1]{\Xi_{#1}}
\newcommand{\SameFamilyGraphOfCF}{\SameFamilyGraphOf{F}}
\newcommand{\ContractionGraphOf}[1]{\Delta_{#1}}
\newcommand{\ContractionGraphOfCF}{\ContractionGraphOf{F}}
\newcommand{\NN}{\mathbb{N}} 
\newcommand{\Iff}{\Leftrightarrow}
\newcommand{\Then}{\Rightarrow}
\DeclareMathOperator{\Argmax}{argmax}
\newcommand{\Set}[1]{\{#1\}}
\newcommand{\Cardinality}[1]{\vert #1 \vert}
\newcommand{\InducedBy}[1]{[#1]}
\newcommand{\ContractedBy}[1]{/#1}
\newcommand{\SpanningTreesOf}[1]{\mathcal{T}_{#1}}
\newcommand{\ModuloRDistance}[1]{{||#1||_R}}
\newtheorem{theorem}{Theorem}[section]
\newtheorem{lemma}[theorem]{Lemma}
\newtheorem{proposition}[theorem]{Proposition}
\newtheorem{corollary}[theorem]{Corollary}
\theoremstyle{remark}
\newtheorem{definition}[theorem]{Definition}
\newcommand{\Abstract}{
\begin{abstract}
We investigate clique trees of infinite locally finite chordal graphs.
Our main contribution is a bijection between the set of clique trees and the product of local finite families of finite trees.
Even more, the edges of a clique tree are in bijection with the edges of the corresponding collection of finite trees.
This allows us to enumerate the clique trees of a chordal graph and extend various classic characterisations of clique trees to the infinite setting.
\end{abstract}
}
\newcommand{\TitleFull}{Clique trees of infinite locally finite chordal graphs}
\newcommand{\TitleShort}{Infinite clique trees}
\newcommand{\TitlePDF}{Infinite\ clique\ trees}
\newcommand{\AuthorsFull}{
 Christoph Hofer-Temmel
 \;and\;
 Florian Lehner
}
\newcommand{\AuthorsShort}{Hofer-Temmel \& Lehner}
\newcommand{\Keywords}{Keywords:
chordal graph,
clique tree,
minimal separator}
\newcommand{\Head}{
 \maketitle
 \Abstract{}
 \Keywords{}\\\MSC{}
}
\title{\TitleFull}
\author{\AuthorsFull}
\date{}
\begin{document}

\Head{}
\tableofcontents

\section{Introduction}
\label{sec_intro}
\par
A graph is chordal, if for every cycle of length greater than three there is a chord, i.e. an edge connecting two non-consecutive vertices on the cycle.
Chordal graphs are a classic object in graph theory and computer science~\cite{Blair_Peyton__AnIntroductionToChordalGraphsAndCliqueTrees__IVMA_1993}.
In the finite case they are known to be equivalent to the class of graphs representable as a family of subtrees of a tree~\cite{Gavril__TheIntersectionGraphsOfSubtreesInTreesAreExactlyTheChordalGraphs__JCTB_1974}.
A finite and connected chordal graph has natural representations of this form: so-called clique trees, which form a subclass of the spanning trees of its clique graph.
\par
This work investigates clique trees of infinite locally finite chordal graphs.
We show their existence and extend various classic characterisations of clique trees from the finite to the infinite case.
\par
Our core contribution is a local partition of the edge set of the clique graph and a corresponding set of constraints, one for each element of the partition, which a clique tree has to fulfil.
This characterises the clique trees by a bijection with the product of the local choices.
See Section~\ref{sec_characterisation}.
Each constraint only depends on the edges within its partition element, whence the constraints are satisfied or violated independently of each other.
Section~\ref{sec_bijection} presents a purely combinatorial and local construction of a clique tree by fixing a satisfying subset of the edges in each element of the partition.
\par
In the case of a finite chordal graph, our main result gives rise to an enumeration of the clique trees, see Section~\ref{sec_enumeration}.
It is equivalent to a prior enumeration via a local partitioning of constraints by Ho and Lee~\cite{Ho_Lee__CountingCliqueTreesAndComputingPerfectEliminationSchemesInParallel__IPL_1989}.
While their partition is indexed by the minimal vertex separators of the chordal graph, ours is indexed by certain families of cliques.
We recover the minimal vertex separators as intersections of the cliques within those families, thus demonstrating the equivalence of the two approaches.
Section~\ref{sec_min_sep} shows this bijection.
As a corollary, we identify the reduced clique graph with the union of all clique trees, extending a result in~\cite{Galinier_Habib_Paul__ChordalGraphsAndTheirCliqueGraphs__LNCS_1995} to infinite graphs.
\par
Classic characterisations~\cite{Blair_Peyton__AnIntroductionToChordalGraphsAndCliqueTrees__IVMA_1993} of clique trees of finite chordal graphs relate various properties of a clique tree to minimal vertex separators of the original graph, or demand maximality with respect to particular edge weights in the clique graph, or describe properties of paths in the tree, among others.
They contain obstacles to an immediate extension to the infinite case, though.
Either their range is unbounded, or the conditions overlap, or the proof depends on the finite setting or they make no sense at all in an infinite setting (such as maximality with respect to edge weights).
Often these obstacles can be overcome by passing to suitable local conditions, i.e.\ conditions depending only on finitely many vertices or edges.
The partition of the edge set allows us to do that.
Consequently, in Section~\ref{sec_classic_characterisations}, we extend several classic characterisations or sensible versions thereof to the infinite case.

\section{Notation and basics}
\label{sec_basics}

\subsection{Graphs}
\label{sec_graphs}
\par
Throughout this paper we consider locally finite multigraphs, that is, all vertex degrees are finite.
We say \emph{graph}, if we exclude multiple edges.
Let $G$ be a multigraph with vertex set $V$.
For $W\subseteq V$, denote by $G\InducedBy{W}$ the submultigraph of $G$ induced by $W$.
For an equivalence relation $\sim$ on $V$, denote by $G\ContractedBy{\sim}$ the multigraph resulting from contracting each equivalence class of $\sim$ to a single vertex.
It may contain loops and multiple edges, even if $G$ does not.
For $W\subseteq V$, let $G\ContractedBy{W}$ be the multigraph with only $W$ contracted to a single vertex, and, for $W_1,\dotsc,W_k$ disjoint subsets of $V$, let $G\ContractedBy{\Set{W_1,\dotsc,W_k}}$ be the multigraph resulting from $G$ by contracting each $W_i$ to a single vertex.
If we speak of the graph $G\ContractedBy{\sim}$ (or one of the above variants), then we mean the graph underlying the multigraph $G\ContractedBy{\sim}$, including possible loops.
\par
A multigraph is \emph{complete}, if all vertices are adjacent to each other.
We say that $W\subseteq V$ is \emph{complete}, if $G\InducedBy{W}$ is complete.
A \emph{clique} is a maximal complete subset of $V$.
Denote by $\CompleteSetsOfG$ and $\CliquesOfG$ the set of all complete vertex subsets and cliques of $G$ respectively.
The \emph{clique graph} $\CliqueGraphOfG$ of $G$ has vertex set $\CliquesOfG$ and an edge for every pair of cliques with non-empty intersection.
As $G$ is locally finite, all its cliques are finite and every vertex is contained in only a finite number of cliques.
Whence, the clique graph $\CliqueGraphOfG$ is locally finite, too.
\par
A \emph{tree} $T$ is a connected and acyclic graph.
A subgraph of $G$ is \emph{spanning}, if it has the same vertex set as $G$.
Denote by $\SpanningTreesOf{G}$ the \emph{set of spanning trees} of $G$.

\subsection{The lattice of clique families}
\label{sec_cliquefamilies}
\par
For $C\in\CompleteSetsOfG$, the \emph{clique family generated by $C$} is
\begin{equation*}
 \CFGeneratedBy{C}
 := \Set{K \in \CliquesOfG \mid C \subseteq K}\,.
\end{equation*}
Clique families are always non-empty.
Generation is \emph{contravariant}, as
\begin{equation}\label{eq_generation_contravariance}
 C\subseteq C' \Then \CFGeneratedBy{C'}\subseteq\CFGeneratedBy{C}\,.
\end{equation}
The largest clique family is $\CFGeneratedBy{\emptyset} = \CliquesOfG$.
It is infinite if and only if $G$ is infinite, and in this case, it is the only infinite clique family.
For $v\in G$, we abbreviate $\CFGeneratedBy{\Set{v}}$ to $\CFGeneratedBy{v}$.
These are the building blocks of all finite clique families:
\begin{equation}\label{eq_cliqueFamilyViaIntersection}
 \CFGeneratedBy{C}=\bigcap_{v\in C} \CFGeneratedBy{v}\,.
\end{equation}
Let $F$ be a clique family.
Every $C\in\CompleteSetsOfG$ with $\CFGeneratedBy{C}=F$ is a \emph{generator} of $F$.
There is a \emph{maximal generator} of $F$ with respect to set inclusion:
\begin{equation}\label{eq_maximalGenerator}
 \MaxGeneratorOf{F}
 := \bigcap_{K \in F} K
 = \bigcup_{\CFGeneratedBy{C}= F} C\,.
\end{equation}
In particular, we have
\begin{equation}\label{eq_maxGeneratorRegeneratesCliqueFamily}
 \CFGeneratedBy{\MaxGeneratorOf{F}} = F\,.
\end{equation}
It is also immediate that the intersection of two clique families $F_1$ and $F_2$ is again a clique family, more precisely
\begin{equation}
F_1 \cap{}F_2 = \CFGeneratedBy{\MaxGeneratorOf{F_1} \cup \MaxGeneratorOf{F_2}}.
\end{equation}

\par
The sets of generators of two clique families coincide, if and only if the clique families are equal, and are disjoint otherwise.
This follows from the equivalence relation $\sim$ on $\CompleteSetsOfG$ given by $C_1\sim C_2 \Iff \CFGeneratedBy{C_1}=\CFGeneratedBy{C_2}$.
An equivalence class of $\sim$ corresponds to the set of generators of a clique family.

\begin{proposition}\label{prop_cliqueEdge_maxGenerator}
Choose distinct $K_1,K_2\in\CliquesOfG$.
There is an edge $K_1 K_2\in\CliqueGraphOfG$, if and only if $\emptyset\not=K_1\cap K_2=\MaxGeneratorOf{\CFGeneratedBy{K_1\cap K_2}}$.
\end{proposition}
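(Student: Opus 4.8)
The plan is to unwind the definitions and observe that the only substantive content is the forward implication, which amounts to checking that $K_1\cap K_2$ is the maximal generator of the clique family it generates.

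First I would recall that, by the definition of the clique graph $\CliqueGraphOfG$, the edge $K_1 K_2$ exists precisely when $K_1\cap K_2\neq\emptyset$. This immediately disposes of the backward direction: the stated condition explicitly asserts $\emptyset\neq K_1\cap K_2$, so it forces the existence of the edge, independently of the equality with the maximal generator.

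For the forward direction, I would set $S:=K_1\cap K_2$ and assume $S\neq\emptyset$. Since $S$ is a subset of the clique $K_1$, it is complete, so $S\in\CompleteSetsOfG$ and the clique family $\CFGeneratedBy{S}$ is defined. The key observation is that both $K_1$ and $K_2$ belong to $\CFGeneratedBy{S}$, because $S\subseteq K_1$ and $S\subseteq K_2$. Consequently, using the left-hand formula in~\eqref{eq_maximalGenerator},
\[
 \MaxGeneratorOf{\CFGeneratedBy{S}}=\bigcap_{K\in\CFGeneratedBy{S}}K\subseteq K_1\cap K_2=S.
\]
The reverse inclusion $S\subseteq\MaxGeneratorOf{\CFGeneratedBy{S}}$ holds automatically, since every clique in $\CFGeneratedBy{S}$ contains $S$ by definition of the generated family. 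Combining the two inclusions yields $S=\MaxGeneratorOf{\CFGeneratedBy{S}}$, which is exactly the asserted equality $K_1\cap K_2=\MaxGeneratorOf{\CFGeneratedBy{K_1\cap K_2}}$.

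I do not expect any genuine obstacle here, as the whole proposition is essentially a definitional observation. The one point that requires a moment's thought is recognising that $K_1$ and $K_2$ are themselves members of the family $\CFGeneratedBy{K_1\cap K_2}$; this is precisely what makes the intersection over that family collapse back down to $K_1\cap K_2$ rather than to something strictly smaller. Everything else is the routine verification that $S$ is complete and that the containment in each direction holds.
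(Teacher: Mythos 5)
Your proof is correct and follows essentially the same route as the paper: both arguments reduce the edge condition to $K_1\cap K_2\neq\emptyset$ and then establish the equality with $\MaxGeneratorOf{\CFGeneratedBy{K_1\cap K_2}}$ via the two inclusions $\MaxGeneratorOf{\CFGeneratedBy{K_1\cap K_2}}=\bigcap_{K\in\CFGeneratedBy{K_1\cap K_2}}K\subseteq K_1\cap K_2$ (since $K_1,K_2$ lie in the family) and the reverse inclusion from the definition of the generated family. Your write-up merely makes explicit a few steps the paper leaves implicit, such as the completeness of $K_1\cap K_2$.
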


\begin{proof}
We have an edge $K_1 K_2\in\CliqueGraphOfG$, if and only if $K_1\cap K_2\not=\emptyset$.
Thus, $F:=\CFGeneratedBy{K_1\cap K_2}$ is finite and we have
\begin{equation*}
 \emptyset
 \not= K_1\cap K_2
 \subseteq \bigcup_{F(C) = F} C
 \stackrel{\eqref{eq_maximalGenerator}}{=}\MaxGeneratorOf{F}
 \stackrel{\eqref{eq_maximalGenerator}}{=}\bigcap_{K\in F} K
 \subseteq K_1\cap K_2
 \,.
 \qedhere
\end{equation*}
\end{proof}
\par
Let $\CliqueFamiliesOfG$ be the set of clique families of $G$.
The clique families $\CliqueFamiliesOfG$ form a lattice with respect to set inclusion.
Equation~\eqref{eq_cliqueFamilyViaIntersection} implies that all chains in the lattice are finite.
We use this fact to reason inductively over this lattice.

\section{Infinite clique trees}
\label{sec_clique_trees}

\subsection{Chordal graphs and subtree representations}
\label{sec_chordal}
\par
Our main reference for basic facts about chordal graphs is~\cite{Blair_Peyton__AnIntroductionToChordalGraphsAndCliqueTrees__IVMA_1993}.
A \emph{chordal graph} contains no induced cycle of length greater than $3$.
In other words, the induced graph of every cycle of length greater than $3$ contains a \emph{chord}, an edge connecting two non-consecutive vertices of the cycle.
Throughout this work, we assume that chordal graphs are connected, locally finite and do not contain loops.
\par
Let $T$ be a tree and denote by $\mathcal{T}$ the family of subtrees of $T$.
A function $t: V \to \mathcal{T}$ is a \emph{subtree representation of $G$ on $T$}, if $v_{1}v_{2} \in G\Leftrightarrow t(v_{1}) \cap t(v_{2}) \neq \emptyset$.
A finite graph is chordal, if and only if it has a subtree representation on some tree~\cite{Gavril__TheIntersectionGraphsOfSubtreesInTreesAreExactlyTheChordalGraphs__JCTB_1974}.
This remains true for locally finite infinite graphs, but there are examples of countable and non-locally finite chordal graphs which do not admit a subtree representation~\cite{Halin__OnTheRepresentationOfTriangulatedGraphsInTrees__EJC_1984}.
\par
A special kind of subtree representation is given by clique trees.
Clique trees are subtree representations on spanning trees of the clique graph $\CliqueGraphOfG$.
Their existence for finite chordal graphs is a classic result~\cite{Gavril__TheIntersectionGraphsOfSubtreesInTreesAreExactlyTheChordalGraphs__JCTB_1974}.

\begin{definition}\label{def_cliqueTree}
Let $G$ be a chordal graph.
A spanning tree $T$ of $\CliqueGraphOfG$ is called a \emph{clique tree} of $G$, if
\begin{equation}\label{eq_cliqueTree_inducedSubTree}
 \forall\,v\in V:\quad T\InducedBy{\CFGeneratedBy{v}}\text{ is a tree.}
\end{equation}
\end{definition}
A clique tree $T$ represents $G$ via the subtree map $v\mapsto T\InducedBy{\CFGeneratedBy{v}}$, where $\CFGeneratedBy{v}$ is the clique family generated by $\Set{v}$.
Let $\CliqueTreesOfG$ be the set of clique trees of $G$.
\par
The following sections show not only the existence of clique trees of infinite chordal graphs, but a way of constructing them from independent local pieces.
The classic recursive construction in~\cite{Gavril__TheIntersectionGraphsOfSubtreesInTreesAreExactlyTheChordalGraphs__JCTB_1974} depends on the finiteness of the graph to terminate and does not give any indication of how to obtain an independent construction for non-adjacent parts of the chordal graph, a natural goal given the tree-like structure of chordal graphs.

\subsection{Existence of clique trees}
\label{sec_existence}

A first existence result stems from an implicit construction by a limiting procedure.
Explicit local constructions follow in Section~\ref{sec_bijection}.

\begin{theorem}\label{thm_existence}
Every infinite, locally finite chordal graph has a clique tree.
\end{theorem}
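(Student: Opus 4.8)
The plan is to reduce to the finite case by a compactness (diagonalisation) argument. First I would record that $\CliqueGraphOfG$ is connected and locally finite, hence countable: connectedness follows because a $G$-path between a vertex of one clique and a vertex of another, together with cliques covering its consecutive edges, induces a walk in $\CliqueGraphOfG$. Fix a clique $K_0$, let $B_n\subseteq\CliquesOfG$ be the ball of radius $n$ around $K_0$ in $\CliqueGraphOfG$, and set $G_n:=G\InducedBy{\bigcup_{K\in B_n}K}$. Local finiteness makes each $B_n$ finite, the $B_n$ exhaust $\CliquesOfG$, and each $G_n$ is a finite, connected, chordal graph. By the classical finite construction~\cite{Gavril__TheIntersectionGraphsOfSubtreesInTreesAreExactlyTheChordalGraphs__JCTB_1974}, each $G_n$ has a clique tree $T_n$.

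The key technical point, and what I expect to be the main obstacle, is that the cliques of $G_n$ need not be cliques of $G$: a clique of $G$ may be truncated at the boundary of $B_n$ and reappear as a spurious maximal complete set of $G_n$. I would neutralise this by isolating the cliques immune to truncation. Call a vertex $v$ \emph{deep at stage $n$} if $v\in K$ for some $K\in B_{n-1}$. Since all cliques of $G$ containing a fixed vertex pairwise intersect in that vertex, they are mutually adjacent in $\CliqueGraphOfG$; consequently, for a deep vertex $v$ every $G$-clique through $v$ already lies in $B_n$ and is contained in $\bigcup_{K\in B_n}K$, so it is a genuine clique of $G_n$ and, conversely, no spurious $G_n$-clique can contain $v$. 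Hence for a deep $v$ the $G_n$-cliques through $v$ are exactly $\CFGeneratedBy{v}$, all genuine, and likewise every $T_n$-edge incident to a deep clique is an edge of $\CliqueGraphOfG$ between two genuine cliques.

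Since $\CliqueGraphOfG$ has only countably many edges, I would pass to a subsequence along which the membership in $T_n$ of each edge of $\CliqueGraphOfG$ is eventually constant, and let $T$ consist of the edges eventually present. Every vertex $v$ is deep for all large $n$, so for large $n$ the restriction $T_n\InducedBy{\CFGeneratedBy{v}}$ is a spanning tree of the finite \emph{complete} subgraph of $\CliqueGraphOfG$ on $\CFGeneratedBy{v}$; this stabilised tree is exactly $T\InducedBy{\CFGeneratedBy{v}}$, so $T$ satisfies~\eqref{eq_cliqueTree_inducedSubTree}. Acyclicity of $T$ is inherited from the $T_n$, since any finite cycle of $T$ would eventually lie inside some acyclic $T_n$; and every genuine clique is covered by $T$, because a deep clique has degree at least one in $T_n$ along genuine edges.

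Finally, connectivity of $T$—which in such compactness arguments is normally the delicate step—comes for free from~\eqref{eq_cliqueTree_inducedSubTree} together with connectedness of $G$. Given cliques $K,K'$, pick $u\in K$, $u'\in K'$ and a $G$-path $u=x_0,\dots,x_m=u'$; each edge $x_ix_{i+1}$ lies in some clique $Q_i$, and each $T\InducedBy{\CFGeneratedBy{x_i}}$ is connected and contains both $Q_{i-1}$ and $Q_i$, while $K\in\CFGeneratedBy{x_0}$ and $K'\in\CFGeneratedBy{x_m}$. Chaining these overlapping connected pieces yields a $T$-path from $K$ to $K'$, so $T$ is connected. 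Thus $T$ is a spanning tree of $\CliqueGraphOfG$ satisfying~\eqref{eq_cliqueTree_inducedSubTree}, i.e. a clique tree in the sense of Definition~\ref{def_cliqueTree}. The only real work is the deep/spurious-clique bookkeeping of the second step; once that is in place, the limit and the connectivity argument are routine.
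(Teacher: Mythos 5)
Your proposal is correct and is essentially the paper's own proof: both run a compactness/diagonalisation argument over finite chordal subgraphs obtained as unions of $G$-cliques, both hinge on the observation that the clique family $\CFGeneratedBy{v}$ of a sufficiently interior vertex $v$ survives unchanged in the finite approximation (your deep-vertex lemma; the paper builds this in by taking $G_n$ to be the union of all cliques through its first $n$ enumerated vertices), and both finish with the same three verifications — stabilised induced subtrees giving~\eqref{eq_cliqueTree_inducedSubTree}, acyclicity because a finite cycle of $T$ would sit inside some acyclic $T_n$, and connectivity of $T$ by chaining the overlapping subtrees $T\InducedBy{\CFGeneratedBy{x_i}}$ along a $G$-path. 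One minor remark: your side claim that every $T_n$-edge incident to a deep clique joins two genuine cliques is not actually justified (the shared vertex with the other endpoint need not be deep), but it is also never needed, since coverage of every clique by $T$ already follows from your connectivity argument.
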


\begin{proof}
\par
We use a compactness argument, which is a standard approach in infinite graph theory (c.f.~\cite[Chapter~8.1]{Diestel__GraphTheory__Springer_2005}).
Arguments of this type are often useful to obtain a result for infinite graphs from its finite counterpart.
\par
Let $G$ be the chordal graph.
Let $(v_n)_{n\in\NN}$ be an enumeration of the vertices of $G$ such that $V_n:=\Set{v_1,\dotsc,v_n}$ is connected for each $n\in\NN$.
Let $V_n':=\bigcup_{v\in V_n}\bigcup_{K \in \CFGeneratedBy{v}} K$.
In other words, $V_n'$ contains all vertices that lie in a common clique with some vertex in $V_n$, or equivalently, $V_n$ and its neighbours.
The graph $G_n:=G\InducedBy{V_n'}$ is connected and chordal.
\par
For $v\in V_n$, the clique family $\CFGeneratedBy{v}$ (with respect to $G$) is also a clique family of $G_n$, since $G_n$ contains all vertices in cliques containing $v$.
For $n\in\NN$, let $S_n$ be a clique tree of $G_n$.
Because every clique of $G_n$ is also a clique of $G$, we may interpret $S_n$ as a subtree $T_n$ of $\CliqueGraphOfG$.
\par
Define a subgraph $T$ of $\CliqueGraphOf{G}$ as follows.
By the local finiteness of $G$ and thus $\CliqueGraphOf{G}$, there is an infinite subsequence $(T_{n}^{1})_{n\in\NN}$ of $(T_{n})_{n\in\NN}$ which contain the same edges of $\CliqueGraphOf{G}\InducedBy{\CFGeneratedBy{v_{1}}}$.
Add those edges to $T$.
Next, choose an infinite sub-subsequence $(T_{n}^{2})_{n\in\NN}$ of $(T_{n}^{1})_{n\in\NN}$ such that all elements of the sequence $(T_{n}^{2})_{n\in\NN}$ contain the same edges of $\CliqueGraphOf{G}\InducedBy{\CFGeneratedBy{v_{2}}}$.
Proceed inductively.
\par
By construction, $T\InducedBy{\CFGeneratedBy{v}}$ is a tree, for each $v\in V$.
It remains to verify that $T$ is a tree as well.
The trees corresponding to $v$ and $w$ overlap, if and only if $\CFGeneratedBy{v} \cap \CFGeneratedBy{w} \neq \emptyset$, equivalent to $vw\in G$.
Hence $T$ is connected, because $G$ was assumed to be so.
If $T$ contains a cycle $C$, then it lies in $\CliqueGraphOf{G_m}$, for some $m\in\NN$.
Hence, $C$ is a cycle in the tree $T_1^m\InducedBy{\CliqueGraphOf{G_m}}$, a contradiction.
\end{proof}

\subsection{Local characterisation via clique families}
\label{sec_characterisation}
\par
We show how to construct clique trees of locally finite graphs from small local pieces.
Those pieces live on domains defined in terms of the clique families.
\par
For $F \in \CliqueFamiliesOfG$, let $\StrictSubfamilyGraphOfCF$ be the subgraph of $\CliqueGraphOfG\InducedBy{F}$ with vertex set $F$ and an edge $K_1 K_2\in\StrictSubfamilyGraphOfCF$, if $\CFGeneratedBy{K_1\cap K_2}\subsetneq F$, equivalent to $K_1\cap K_2\supsetneq\MaxGeneratorOf{F}$ by Proposition~\ref{prop_cliqueEdge_maxGenerator}.
Intuitively, the graph $\StrictSubfamilyGraphOfCF$ connects cliques in $F$ whose intersection is ''larger than necessary'', i.e.  their intersection contains a vertex which is not contained in every clique in $F$.
Let $\CFEquivalenceRelation$ be the equivalence relation whose classes are the connected components of $\StrictSubfamilyGraphOfCF$, and let $\CFEquivalenceClassOf{K}$ denote the equivalence class of $K$ with respect to  $\CFEquivalenceRelation$.
This permits to characterise a clique tree in a finer-grained manner than~\eqref{eq_cliqueTree_inducedSubTree}.

\begin{theorem}\label{thm_characterisation}
Let $G$ be a chordal graph.
A spanning subgraph $T$ of $\CliqueGraphOfG$ is a clique tree of $G$, if and only if it satisfies one of the following equivalent conditions:
\begin{subequations}\label{eq_characterisation}
\begin{align}
 \label{eq_characterisation_inducedTree}
 \forall F\in\CliqueFamiliesOfG:
 \quad
 &T\InducedBy{F}\text{ is a tree,}
 \\
 \label{eq_characterisation_contractedTree}
 \forall F\in\CliqueFamiliesOfG:
 \quad
 &T\InducedBy{F}\ContractedBy{\CFEquivalenceRelation}
 \text{ without its loops is a tree.}
\end{align}
\end{subequations}
\end{theorem}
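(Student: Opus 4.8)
The plan is to route everything through the vertex families $\CFGeneratedBy{v}$ and the top family $\CliquesOfG=\CFGeneratedBy{\emptyset}$, first identifying ``clique tree'' with condition~\eqref{eq_characterisation_inducedTree} and only afterwards matching~\eqref{eq_characterisation_inducedTree} with~\eqref{eq_characterisation_contractedTree}. That $T$ is a clique tree (Definition~\ref{def_cliqueTree}) follows trivially from~\eqref{eq_characterisation_inducedTree}: applying it to the two clique families $\CliquesOfG$ and $\CFGeneratedBy{v}$ returns exactly that $T=T\InducedBy{\CliquesOfG}$ is a spanning tree and that each $T\InducedBy{\CFGeneratedBy{v}}$ is a tree. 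For the converse I would use that a connected subgraph of the tree $T$ is the same thing as a convex vertex set (one closed under taking the unique $T$-path between any two of its vertices). Each $T\InducedBy{\CFGeneratedBy{v}}$ is a subtree, so $\CFGeneratedBy{v}$ is convex; by~\eqref{eq_cliqueFamilyViaIntersection} and~\eqref{eq_maxGeneratorRegeneratesCliqueFamily} every clique family is $F=\bigcap_{v\in\MaxGeneratorOf{F}}\CFGeneratedBy{v}$ (the empty intersection, occurring only for $\CliquesOfG$, being the whole vertex set, for which $T\InducedBy{\CliquesOfG}=T$ is a tree). An intersection of convex sets is convex, a nonempty convex set induces a connected subgraph, and that subgraph is acyclic as a subgraph of $T$; hence $T\InducedBy{F}$ is a tree.

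Next I would show~\eqref{eq_characterisation_inducedTree}$\Then$\eqref{eq_characterisation_contractedTree}, now using that $T$ is a clique tree, so $T\InducedBy{F}$ is a tree for every $F$. The key point is that each $\CFEquivalenceRelation$-class induces a connected subgraph of $T\InducedBy{F}$. Indeed, for $K_1K_2\in\StrictSubfamilyGraphOfCF$ put $F':=\CFGeneratedBy{K_1\cap K_2}\subsetneq F$; then $\emptyset\neq K_1\cap K_2\subseteq\MaxGeneratorOf{F'}$, so any two members $L,L'$ of $F'$ satisfy $\MaxGeneratorOf{F'}\subseteq L\cap L'$ and $\CFGeneratedBy{L\cap L'}\subseteq F'\subsetneq F$, which makes $F'$ a clique of $\StrictSubfamilyGraphOfCF$, hence contained in a single $\CFEquivalenceRelation$-class. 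Since $T\InducedBy{F'}$ is a subtree joining $K_1$ and $K_2$ inside that class, the class is connected in $T\InducedBy{F}$. Contracting vertex-disjoint subtrees of the tree $T\InducedBy{F}$ yields a tree: two distinct classes carry at most one joining edge of $T\InducedBy{F}$ (two such edges would close a cycle), so no multiple edges arise, while the intra-class edges become exactly the deleted loops.

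The last and hardest direction is~\eqref{eq_characterisation_contractedTree}$\Then$\eqref{eq_characterisation_inducedTree}, which I would prove by induction over the lattice $\CliqueFamiliesOfG$; this is legitimate because all its chains are finite, and the singleton families $\CFGeneratedBy{K}=\Set{K}$ give a trivial base. Assuming $T\InducedBy{F'}$ is a tree for all $F'\subsetneq F$, the connectivity argument of the previous paragraph (which only used this hypothesis) shows each $\CFEquivalenceRelation$-class is connected in $T\InducedBy{F}$, so $T\InducedBy{F}$ is connected because its loopless contraction is a tree. For acyclicity I would project a hypothetical cycle $Z$ of $T\InducedBy{F}$ onto the tree $T\InducedBy{F}\ContractedBy{\CFEquivalenceRelation}$: a closed walk in a tree traverses each edge an even number of times, whereas each inter-class edge of $T\InducedBy{F}$ is the unique edge between its two classes (the loopless contraction being a \emph{simple} tree by~\eqref{eq_characterisation_contractedTree}) and is met by the simple cycle $Z$ at most once; hence $Z$ uses no inter-class edge and lies inside a single class $A$.

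The main obstacle is disposing of such a cycle $A$. If $A$ lies in a proper subfamily then $T\InducedBy{A}$ is a subgraph of a tree by the inductive hypothesis and $Z$ cannot exist; the genuinely hard case is when the cliques of $A$ intersect in exactly $\MaxGeneratorOf{F}$, so that no strictly smaller clique family captures $Z$. This is precisely where chordality must be invoked: the soft arguments above alone would even admit a four-clique cycle with pairwise single-vertex intersections, which satisfies~\eqref{eq_characterisation_contractedTree} vacuously yet fails to be a tree — and that configuration is exactly a chordless four-cycle in $G$, hence impossible here. I therefore expect to close this case with a Helly/minimal-separator property of the subtree representation furnished by Theorem~\ref{thm_existence}, which forces the subfamily subtrees covering $A$ to glue together without creating a cycle.
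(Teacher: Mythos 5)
Your first two steps are sound and essentially parallel the paper's own proof: the equivalence of Definition~\ref{def_cliqueTree} with~\eqref{eq_characterisation_inducedTree} (your convexity argument is a compact repackaging of the paper's lemma~\ref{lem_treeIntersection} on intersections of subtrees), and the direction \eqref{eq_characterisation_inducedTree}~$\Then$~\eqref{eq_characterisation_contractedTree} (each $\CFEquivalenceRelation$-class is connected via subfamily trees, then contract disjoint subtrees of a tree; the paper does this via claim~\ref{clm_bottom_up} and lemma~\ref{lem_severalContractions}). Your reduction in the converse direction --- induction over the lattice, connectivity of $T\InducedBy{F}$ from the contraction, and the parity/projection argument confining a hypothetical cycle $Z$ to a single $\CFEquivalenceRelation$-class $A$ with $\bigcap_{K\in A}K=\MaxGeneratorOf{F}$ --- is also correct and reduces the theorem exactly to the paper's claim~\ref{clm_bottom_up}.

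But you stop precisely where the real work begins. ``I expect to close this case with a Helly/minimal-separator property of the subtree representation furnished by Theorem~\ref{thm_existence}'' is not an argument, and it cannot be completed as stated: theorem~\ref{thm_existence} produces \emph{some} clique tree of $G$, which places no constraint on the arbitrary spanning subgraph $T$ you are analysing; what must be shown is that the subfamily trees inside $T$ itself cannot glue into a cycle. This is the bulk of the paper's proof: cover the cycle $Z=K_1\dotso K_n$ by consecutive-intersection families $F_1,\dotsc,F_r$ with $r$ minimal; dispose of $r=1$ (a cycle in a tree) and $r=2$ (lemma~\ref{lem_treeUnion}); for $r=3$ use a Helly-type completeness argument (pairwise complete unions $C_i\cup C_j$ force a single clique containing $C_1\cup C_2\cup C_3$, so $F_1\cap F_2\cap F_3\neq\emptyset$) plus three applications of lemma~\ref{lem_treeUnion}; and for $r\ge 4$ build a ``two-chordless'' cycle in $G$, contradicting chordality via lemma~\ref{lem_twoChord}. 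Your Helly intuition corresponds only to the $r=3$ case; the $r\ge4$ case --- which your own four-clique example shows is exactly where chordality is indispensable --- requires the separate two-chordless-path construction, and nothing in your proposal produces it. A second, smaller gap: your parity argument excludes inter-class edges from $Z$, but $Z$ could a priori also use an edge joining two cliques of the \emph{same} class with $K_i\cap K_j=\MaxGeneratorOf{F}$; such an edge lies in no proper subfamily, so your inductive hypothesis is silent about it. It is excluded only because it would contract to a loop, and \eqref{eq_characterisation_contractedTree} --- read, as the paper's theorem~\ref{thm_bijection} reads it, as requiring the $\SameFamilyGraphOfCF$-edges of $T$ to form a spanning tree of $\ContractionGraphOfCF$ --- forbids loops; your write-up invokes looplessness only to rule out parallel edges between two distinct classes.
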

\par
Note that, only~\eqref{eq_characterisation_inducedTree} says directly that $T=T\InducedBy{\CliquesOfG}$ is a tree.
In~\eqref{eq_characterisation_contractedTree}, this fact is not so obvious, but follows from an inductive bottom-up construction.
The advantage of~\eqref{eq_characterisation_contractedTree} is that it allows to compose a clique tree from trees on smaller parts of the clique graph.
In Section~\ref{sec_bijection}, we see that these parts do not overlap.
Thus, we may pick the trees in~\eqref{eq_characterisation_contractedTree} independently of each other.
Consequently, we construct parts of a clique tree locally without knowing the global structure.
\par
Before we give a proof of Theorem~\ref{thm_characterisation} in Section~\ref{sec_proof}, we formulate and prove some auxiliary results in Section~\ref{sec_tree_combination}.

\subsection{Combining trees}
\label{sec_tree_combination}

\begin{lemma}
\label{lem_severalContractions}
Let $G$ be a finite graph with vertex set $V$.
Let $V_{1},V_{2},\dotsc,V_{k}$ be disjoint subsets of $V$.
Every choice of two of the following statements implies the third one:
\begin{subequations}\label{eq_severalContractions}
\begin{gather}
 \label{eq_severalContractions_full}
 G\text{ is a tree,}
 \\
 \label{eq_severalContractions_contracted}
 \forall 1\le i\le k:\quad  G\ContractedBy{\Set{V_1,\dotsc,V_i}}\text{ without its loops is a tree,}
 \\
 \label{eq_severalContractions_restricted}
 \forall 1\le i\le k:\quad G\InducedBy{V_i}\text{ is a tree.}
\end{gather}
\end{subequations}
\end{lemma}

\begin{proof}
A cycle is non-trivial, if it has length greater than $2$.
\par
\underline{\eqref{eq_severalContractions_full} and~\eqref{eq_severalContractions_contracted} $\Then$~\eqref{eq_severalContractions_restricted}}:
$G\InducedBy{V_i}$ does not contain a cycle.
It must be connected, because otherwise there would be a path between two of its connected components in $G$ contracting to a non-trivial cycle in $G\ContractedBy{\Set{V_1,\dotsc,V_i}}$.
\par
\underline{\eqref{eq_severalContractions_full} and~\eqref{eq_severalContractions_restricted} $\Then$~\eqref{eq_severalContractions_contracted}}:
$G\ContractedBy{\Set{V_1,\dotsc,V_i}}$ is connected as a contraction of a connected graph.
Because we only contract connected sets, a non-trivial cycle in $G\ContractedBy{\Set{V_1,\dotsc,V_i}}$ corresponds to a non-trivial cycle in $G$.
\par
\underline{\eqref{eq_severalContractions_contracted} and~\eqref{eq_severalContractions_restricted} $\Then$~\eqref{eq_severalContractions_full}}:
$G$ is connected, because it is obtained from the connected graph $G\ContractedBy{\Set{V_1,\dotsc,V_k}}$ by expanding $i$ vertices into the connected graphs $G\InducedBy{V_i}$.
There cannot be a cycle in $G$, because such a cycle would either lie in some $G\InducedBy{V_i}$ or contract to a non-trivial cycle in $G\ContractedBy{\Set{V_1,\dotsc,V_k}}$.
\end{proof}

\begin{lemma}
\label{lem_treeIntersection}
Let $T$ be a tree with vertex set $V$ and $V_1,V_2\subseteq V$ with non-empty intersection.
If $T\InducedBy{V_1}$ and $T\InducedBy{V_2}$ are trees, then $T\InducedBy{V_1\cap V_2}$ is also a tree.
\end{lemma}

\begin{proof}
Obviously, there is no cycle in $T\InducedBy{V_1\cap V_2}$.
To see that it is connected, observe that for any two vertices $u,v \in V_1\cap V_2$ there are unique $u$-$v$-paths in $T$, $T\InducedBy{V_1}$ and $T\InducedBy{V_2}$.
Those paths coincide and are in $T\InducedBy{V_1\cap V_2}$.
\end{proof}

The following lemma is specific to the situation of clique trees of chordal graphs.
It contains key steps of the proof of Theorem~\ref{thm_characterisation}.

\begin{lemma}
\label{lem_no_cycle_in_ccs}
Let $G$ be chordal graph.
Let $K$ be a clique of $G$ and let $F \in \CliqueFamiliesOfG$ be a clique family containing $K$.
Let $S$ be a subgraph of $\StrictSubfamilyGraphOfCF$ with vertex set $\CFEquivalenceClassOf{K}$.
If, for each clique family $F'\subsetneq F$ with $F' \subseteq \CFEquivalenceClassOf{K}$, $S\InducedBy{F'}$ is a tree, then $S$ is a tree.
\end{lemma}

\begin{proof}

\par
First note that every clique family $F'\subsetneq F$ is either fully contained in $\CFEquivalenceClassOf{K}$ or disjoint from it.
Indeed, any two cliques $K_1,K_2\in{}F'$ satisfy $K_1 \cap K_2 \supseteq C(F') \supsetneq C(F)$.
Thus, they are connected by an edge in $\StrictSubfamilyGraphOfCF$.

\par
\underline{$S$ is connected}:
Let $K'$ be a vertex of $S$.
Because $K\CFEquivalenceRelation K'$, there is a $K'$-$K$-path $K_{0}\dotso K_{n}$ in $\StrictSubfamilyGraphOfCF$.
The definition of edges in $\StrictSubfamilyGraphOfCF$ implies that $K_i\cap K_{i-1}\supsetneq\MaxGeneratorOf{F}$.
Thus, $F_i:=\CFGeneratedBy{K_i\cap K_{i-1}}\subsetneq F$ and $S\InducedBy{F_{i}}$ is a tree containing a $K_{i-1}$-$K_i$-path $P_i$.
The union of $P_1,\dotsc,P_n$ contains a $K$-$K'$-path in $S$.

\par
\underline{$S$ is acyclic}:
Let $\MaxSubfamiliesOfF$ be the set of maximal strict clique subfamilies of $F$.
We say that $F'\in\MaxSubfamiliesOfF$ \emph{covers an edge} $e\in{}S$, if $e\in{}S\InducedBy{F'}$.
If $e$ is an edge of $S$, then it is also an edge of $\StrictSubfamilyGraphOfCF$.
In particular, its endpoints correspond to cliques whose intersection generates a strict subfamily $F' \subsetneq F$.
Hence, each edge in $S$ is covered by some clique family in $\MaxSubfamiliesOfF$.
A subset of $\MaxSubfamiliesOfF$ \emph{covers a subgraph} of $\StrictSubfamilyGraphOfCF$, if each edge of the subgraph is covered by at least one element of the subset.

\par
Assume for a contradiction that $S$ contains a non-trivial cycle.
Let $R\le\Cardinality{\MaxSubfamiliesOfF}$ be the minimal cardinality of a subset of $\MaxSubfamiliesOfF$ covering a non-trivial cycle of $S$.
For a non-trivial cycle $Z$ with cover $\Set{F_1,\dotsc,F_R}$, we say that an edge of $Z$ is \emph{uniquely covered} by $F_i$, if it is covered by $F_i$ and not covered by any other $F_i$, for $j \neq i$.
Denote by $U_i$ the set of edges uniquely covered by $F_i$.
The minimality of $R$ implies that all $U_i$ are non-empty.
Call $F_i$ \emph{nice} for $Z$, if all edges of $U_i$ are contained in the same connected component of $Z\InducedBy{F_i}$.
We claim that there exists a non-trivial cycle $Z$ with cover $\Set{F_1,\dotsc,F_R}$ such that
\begin{equation}
\label{eq_cover_nice}
 \forall{}1\le{}i\le{}R:\quad{}F_i\text{ is nice for }Z
 \,.
\end{equation}

\par
The statement~\eqref{eq_cover_nice} is trivial for $R=1$.
For $R\ge{}2$, we transform $Z$ into a non-trivial cycle with the same cover $\Set{F_1,\dotsc,F_R}$ and fulfilling~\eqref{eq_cover_nice} in two steps:
\begin{enumerate}
\item\label{step_f1_nice}
There is a cycle $Z'$ covered by $\Set{F_1,\dotsc,F_R}$ with $F_1$ being nice for $Z'$.
\item\label{step_all_nice}
If $F_1$ is nice, then $F_2,\dotsc,F_R$ are nice, too.
\end{enumerate}

\par
\underline{Step~\ref{step_f1_nice}}:
If $Z\InducedBy{F_1}$ is connected, then let $Z' := Z$.
Otherwise, pick edges $e,f$ in different connected components of $Z\InducedBy{F_1}$ such that there is a non-trivial path $P$ in $Z$ without an edge covered by $F_1$ and connecting an endpoint of $e$ and $f$ each.
The path $P$ is covered by $\Set{F_2,\dotsc,F_R}$.
Because the endpoints of $P$ are distinct and contained in $F_1$, and $F_1$ is connected, there is a non-trivial path $Q$ with the same endpoints as $P$ and covered by $F_1$.
Clearly, $P$ and $Q$ are edge-disjoint and $Z' := P \cup Q$ is a non-trivial cycle covered by $\Set{F_1,\dotsc,F_R}$.
As $Q$ is connected, $F_1$ is nice for $Z'$.

\par
\underline{Step~\ref{step_all_nice}}:
Assume that $F_1$ is nice but some other $F_i$ is not.
Let $e,f \in U_i$ be edges contained in different connected components of $Z\InducedBy{F_i}$.
The graph $Z-\Set{e,f}$ consists of two non-trivial paths $P_1$ and $P_2$.
Without loss of generality, all of $U_1$ lies in $P_1$ and $P_2$ is covered by $\Set{F_2,\dotsc,F_R}$.
There is a non-trivial path $Q$ in $S\InducedBy{F_i}$ connecting the endpoints of $P_1$.
Because $P_2$ contains an edge not covered by $F_i$ and $Q$ is covered by $F_i$, we have $P_2\neq Q$.
Hence, $P_2\cup Q$ contains a non-trivial cycle covered by $\Set{F_2,\dotsc,F_R}$.
This contradicts the minimality of $R$.

\par
Let $\ModuloRDistance{i-j}$ be the modulo $R$ distance between $i$ and $j$.
We claim that we may reorder the clique families in a cover of a cycle fulfilling~\eqref{eq_cover_nice} such that
\begin{equation}
\label{eq_cover_overlap_modulo}
 \forall{}1\le{}i<j\le{}R:
 \quad
 F_i\cap{}F_j\not=\emptyset
 \Iff{}
 \ModuloRDistance{i-j}\le{}1
\end{equation}

\par
For each $i$, choose an edge $e_i \in U_i$.
Order the sets $F_i$ according to the cyclic order of the edges $e_i$.
The $F_i$'s niceness by~\eqref{eq_cover_nice} guarantees that the order is independent of the choice of edges.
Without loss of generality assume that the order is $1,2,\dotsc,R$.
\par
First, we show that distant clique families are disjoint.
Consider $i,j$ with $\ModuloRDistance{i-j}>1$, whence $R\ge{}4$ holds.
Assume that $F_i\cap{}F_j\not=\emptyset$.
The graph $Z-\Set{e_i,e_j}$ consists of two disjoint non-trivial paths $P_1$ and $P_2$.
Because $F_i\cap{}F_j\not=\emptyset$, there is a non-trivial path $Q$ in $S\InducedBy{F_i\cup{}F_j}$ connecting $e_i$ and $e_j$.
Niceness implies that there are distinct clique families $F_{k_1}$ and $F_{k_2}$ with $k_1,k_2\not\in\Set{i,j}$ such that $U_{k_1}\subseteq{}P_1$ and $U_{k_2}\subseteq{}P_2$.
Hence, $P_1\not=Q$ and $P_2\not=Q$.
Therefore, the union of $P_1$, $Q$ and $\Set{e_i,e_j}$ contains a non-trivial cycle covered by $\Set{F_1,\dotsc,F_R}\setminus\Set{F_{k_2}}$.
This contradicts the minimality of $R$.
\par
Second, we show that close clique families overlap.
For $i$, let $F_j$ and $F_k$ be the two clique families with $\ModuloRDistance{i-j} = 1 = \ModuloRDistance{i-k}$.
Without loss of generality, assume that $F_i\cap{}F_j=\emptyset$.
This can only happen if $R\ge{}3$.
The connected component of $Z\InducedBy{U_i}$ containing $e_i$ is a path $P$.
The endpoints of $P$ are contained in $F_k$, because of disjointness they can not be part of $F_l$, with $l\not\in\Set{i,j,k}$, nor in $F_j$, by assumption.
Connect these endpoints by a path $Q$ in $S\InducedBy{F_k}$.
The paths $P$ and $Q$ are distinct.
Their union contains a non-trivial cycle covered by $\Set{F_i,F_k}$.
This contradicts the assumption that $R\ge{}3$.

\par
This completes the proof of the claim~\eqref{eq_cover_overlap_modulo}.
From here on, we work with a cycle $Z$ satisfying both~\eqref{eq_cover_nice} and~\eqref{eq_cover_overlap_modulo}.
We proceed by case analysis on $R$.

\par
\underline{Case $R=1$}:
The tree $S\InducedBy{F_1}$ cannot contain the cycle $Z$.

\par
\underline{Case $R=2$}:
The fact that $F_1$ and $F_2$ are nice by~\eqref{eq_cover_nice} implies that there exist distinct $K_1,K_2\in{}Z\InducedBy{F_1\cap{}F_2}$ splitting $Z$ into two non-trivial paths $P_1$ and $P_2$, disjoint except in $\Set{K_1,K_2}$, such that $U_1\subseteq{}P_1$ and $U_2\subseteq{}P_2$.
As $S\InducedBy{F_1\cap{}F_2}$ is a tree, there exists a unique path $Q$ between $K_1$ and $K_2$ in $S\InducedBy{F_1\cap{}F_2}$.
As $U_1\subseteq{}P_1$, $P_1\not=Q$.
Hence, the union of $P_1$ and $Q$ contains a non-trivial cycle covered by $F_1$.
This is a contradiction to $S\InducedBy{F_1}$ being a tree.

\par
\underline{Case $R=3$}:
The fact that $F_1,F_2$ and $F_3$ are nice by~\eqref{eq_cover_nice} implies that there exist distinct $K_{12}\in{}Z\InducedBy{F_1\cap{}F_2}$, $K_{23}\in{}Z\InducedBy{F_2\cap{}F_3}$ and $K_{13}\in{}Z\InducedBy{F_1\cap{}F_3}$ splitting $Z$ into three non-trivial paths $P_1$, $P_2$ and $P_3$, disjoint except in $\Set{K_{12},K_{23},K_{13}}$, such that, for all $1\le{}i\le{}3$, $U_i\subseteq{}P_i$.
\par
Let $C_{i}:=\MaxGeneratorOf{F_i}$ be the maximal generator of the clique family $F_i$.
By~\eqref{eq_cover_overlap_modulo}, there is a clique in $F_i \cap{}F_j$.
Hence, $C_{i}\cup{}C_{j}$ is complete for all $1\le i,j\le 3$.
Thus, $C:=\bigcup_{i=1}^3 C_{i}$ is complete and $\emptyset\not=\CFGeneratedBy{C}\subseteq\bigcap_{i=1}^3 F_i$.
\par
Fix $K'\in{}\bigcap_{i=1}^3 F_i$.
As $S\InducedBy{F_1\cap{}F_2}$ is a tree, there exists a unique path $P_{12}$ between $K_{12}$ and $K'$ in $S\InducedBy{F_1\cap{}F_2}$.
Likewise, there is a unique path $P_{23}$ in $S\InducedBy{F_2\cap{}F_3}$ between $K_{23}$ and $K'$ and a unique path $P_{13}$ in $S\InducedBy{F_1\cap{}F_3}$ between $K_{13}$ and $K'$.
The union of $P_1$, $P_{12}$ and $P_{13}$ is covered by $F_1$ and contains a non-trivial cycle because $U_1\subseteq{}P_1$.
This is a contradiction to $S\InducedBy{F_1}$ being a tree.

\par
\underline{Case $R\ge{}4$}:
Again, let $C_{i}:=\MaxGeneratorOf{F_i}$ and let $D_i := C_i \setminus \MaxGeneratorOf{F}\not=\emptyset$.
For every vertex $v \in D_i$, the set $\MaxGeneratorOf{F} \uplus \Set{v}$ generates a clique family satisfying $F_i \subseteq \CFGeneratedBy{\MaxGeneratorOf{F} \uplus \Set{v}} \subsetneq F$.
Because $F_i$ is a maximal strict subfamily of $F$, we infer that $F_i = \CFGeneratedBy{\MaxGeneratorOf{F} \uplus \Set{v}}$.
In particular the sets $D_i$ are disjoint, because $v \in D_i \cap D_j$ implies that $F_i = \CFGeneratedBy{\MaxGeneratorOf{F} \uplus \Set{v}} = F_j$.
\par
We investigate the edges between the sets $D_i$.
If $\ModuloRDistance{i-j}=1$, then~\eqref{eq_cover_overlap_modulo} implies the existence of $K\in{}F_i \cap{}F_j$.
Hence, $D_i \cup D_j\subseteq{}C_{i}\cup{}C_j\subseteq{}K$ and $G\InducedBy{D_i \cup D_j}$ is complete.
If $\ModuloRDistance{i-j}>1$, then assume that there is an edge $v_i v_j$ with $v_i \in D_i$ and $v_j \in D_j$.
Then, $\MaxGeneratorOf{F} \uplus \Set{v_i,v_j}$ is complete and $F_i \cap{}F_j$ contains a clique, a contradiction to~\eqref{eq_cover_overlap_modulo}.
Thus, $G\InducedBy{D_i \cup D_j}$ contains no edges between $D_i$ and $D_j$, if $\ModuloRDistance{i-j}>{}1$.
\par
For $1 \leq i \leq R$, choose $v_{i}\in{}D_i$.
The induced subgraph $G\InducedBy{\Set{v_1,\dotsc,v_R}}$ is a chordless cycle with length $R\ge{}4$ and contradicts the chordality of $G$.
\end{proof}

\subsection{Proof of Theorem~\ref{thm_characterisation}}
\label{sec_proof}
\par
We prove the equivalences~\eqref{eq_cliqueTree_inducedSubTree}~$\Iff$~\eqref{eq_characterisation_inducedTree} and~\eqref{eq_characterisation_inducedTree}~$\Iff$~\eqref{eq_characterisation_contractedTree}.
For convenience, we restate them.
A spanning subgraph $T$ of $\CliqueGraphOfG$ is a clique tree of $G$, if and only if it satisfies the following equivalent conditions:
\begin{gather}
 T\text{ is a tree and }
 \forall v\in{}V:
 \quad
 T\InducedBy{\CFGeneratedBy{v}}\text{ is a tree,}
 \tag{\ref{eq_cliqueTree_inducedSubTree}}
 \\
 \forall F\in\CliqueFamiliesOfG:
 \quad
 T\InducedBy{F}\text{ is a tree,}
 \tag{\ref{eq_characterisation_inducedTree}}
 \\
 \forall F\in\CliqueFamiliesOfG:
 \quad
 T\InducedBy{F}\ContractedBy{\CFEquivalenceRelation}
 \text{ without loops is a tree.}
 \tag{\ref{eq_characterisation_contractedTree}}
\end{gather}
\par
\underline{\eqref{eq_cliqueTree_inducedSubTree} $\Then$~\eqref{eq_characterisation_inducedTree}}:
If $F = \CliquesOfG$ or $F = \CFGeneratedBy{v}$, for some vertex $v \in V$, then $T\InducedBy{F}$ is a tree.
Assume that~\eqref{eq_characterisation_inducedTree} does not hold.
The finiteness of chains in $\CliqueFamiliesOfG$ lets us choose a maximal $F\in\CliqueFamiliesOfG$ such that $T\InducedBy{F}$ is not a tree.
Furthermore, each generator of $F$ contains at least two vertices.
Let $C\subseteq\MaxGeneratorOf{F}$ be a minimal generator of $F$.
For every $\emptyset\not=C'\subsetneq{}C$, the contravariance of clique family generation~\eqref{eq_generation_contravariance} implies that $\CFGeneratedBy{C'}$ and $\CFGeneratedBy{C\setminus C'}$ are strictly larger than $F$ and $F = \CFGeneratedBy{C'} \cap \CFGeneratedBy{C\setminus C'}$.
Maximality of $F$ implies that $T\InducedBy{\CFGeneratedBy{C'}}$ and $T\InducedBy{\CFGeneratedBy{C\setminus C'}}$ are trees.
Lemma~\ref{lem_treeIntersection} implies that $T\InducedBy{F}$ is a tree, too.
\par
\underline{\eqref{eq_characterisation_inducedTree} $\Then$~\eqref{eq_cliqueTree_inducedSubTree}}:
Equation~\eqref{eq_characterisation_inducedTree} implies that $T\InducedBy{\CFGeneratedBy{v}}$ is a tree, for each $v\in G$, and that $T\InducedBy{\CliquesOfG}=T$ is a tree.
\par
\underline{\eqref{eq_characterisation_inducedTree} $\Then$~\eqref{eq_characterisation_contractedTree}}:
Let $F \in \CliqueFamiliesOfG$.
Lemma~\ref{lem_no_cycle_in_ccs} together with the assumption that $T\InducedBy{F'}$ is a tree for every $F'\subsetneq F$ implies that $T\InducedBy{\CFEquivalenceClassOf{K}}$ is a tree, for every equivalence class with respect to the relation $\CFEquivalenceRelation$.
If $F \neq \CliquesOfG$, then there are only finitely many equivalence classes.
Hence, we apply Lemma~\ref{lem_severalContractions} to show that $T\InducedBy{F}\ContractedBy{\CFEquivalenceRelation}$ is a tree.
For $F = \CliquesOfG$, we know that $\CFEquivalenceRelation$ is the trivial relation, i.e.\ any two cliques are related.
Whence, $T\InducedBy{F}\ContractedBy{\CFEquivalenceRelation}$ is a single vertex tree.
\par
\underline{\eqref{eq_characterisation_contractedTree} $\Then$~\eqref{eq_characterisation_inducedTree}}:
Assume that there is some $F \in \CliqueFamiliesOfG$ such that $T\InducedBy{F}$ is not a tree.
Choose $F$ minimal with this property.
This is possible because chains in $\CliqueFamiliesOfG$ are finite.
Lemma~\ref{lem_no_cycle_in_ccs} implies that $T\InducedBy{\CFEquivalenceClassOf{K}}$ is a tree for every equivalence class with respect to $\CFEquivalenceRelation$.
Because there are only finitely many equivalence classes and $T\InducedBy{F}\ContractedBy{\CFEquivalenceRelation}$ is a tree,
Lemma~\ref{lem_severalContractions} shows that $T\InducedBy{F}$ is a tree.

\subsection{Edge bijections}
\label{sec_bijection}

\par
In this section we show that the restrictions~\eqref{eq_characterisation_contractedTree} imposed by a clique family and its strict subfamilies are independent of each other.
This allows us to write the set of clique trees as the product of sets of smaller trees, see Theorem~\ref{thm_bijection}.
The product is indexed by the clique families.
For a given clique family, the associated set of trees is independent of the sets of trees for subfamilies of the clique family.
\par
Let $F\in\CliqueFamiliesOfG$.
Recall that $\StrictSubfamilyGraphOfCF$ was defined as the subgraph of $\CliqueGraphOfG\InducedBy{F}$ containing all edges between cliques whose intersection is strictly larger than $\MaxGeneratorOf{F}$.
Let $\SameFamilyGraphOfCF$ be the subgraph of $\CliqueGraphOfG\InducedBy{F}$ containing the remaining edges.
That is, $\SameFamilyGraphOfCF$ contains an edge $K_1 K_2$, if $\CFGeneratedBy{K_1\cap K_2}= F$, or equivalently $K_1\cap K_2=\MaxGeneratorOf{F}$, by Proposition~\ref{prop_cliqueEdge_maxGenerator}.
Intuitively, the graph $\SameFamilyGraphOfCF$ connects cliques in $F$ whose intersection is ''as small as possible'' within $\CliqueGraphOfG\InducedBy{F}$.
It is obvious from the definitions that $\StrictSubfamilyGraphOfCF$ and $\SameFamilyGraphOfCF$ partition the edges of $\CliqueGraphOfG\InducedBy{F}$ into two disjoint sets.
\par
Consider the multigraph $\ContractionGraphOfCF:=\SameFamilyGraphOfCF\ContractedBy{\CFEquivalenceRelation}$, i.e.  all components of $\StrictSubfamilyGraphOfCF$ are contracted to single points.
This graph may contain (multiple) loops.
We use the natural bijection between edges of $\SameFamilyGraphOfCF$ and edges of $\ContractionGraphOfCF$ to label the edges of $\ContractionGraphOfCF$ and differentiate between them.
\par
It is worth noting that $\CliqueGraphOfG\InducedBy{F}\ContractedBy{\CFEquivalenceRelation}$ can be obtained from $\ContractionGraphOfCF$ by adding additional loops.
As a consequence, spanning trees of the two graphs are in one-to-one correspondence.

\begin{proposition}\label{prop_edge_partition}
There is a bijection between the edges of $\CliqueGraphOfG$ and the disjoint union over all clique families $F$ of edges of $\SameFamilyGraphOfCF$.
Via edge-labelling, this extends to the disjoint union of edges of $\ContractionGraphOfCF$.
\begin{equation}\label{eq_edge_bijection}
 \CliqueGraphOfG
 \stackrel{\text{edges}}{=}
   \biguplus_{F\in\CliqueFamiliesOfG}
   \SameFamilyGraphOfCF
 \stackrel{\text{edge-labelling}}{=}
   \biguplus_{F\in\CliqueFamiliesOfG}
   \ContractionGraphOfCF\,.
\end{equation}
\end{proposition}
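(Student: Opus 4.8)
The plan is to write down an explicit edge map for the first equality, check it is a bijection, and then reduce the second equality to the edge-preserving nature of the multigraph contraction.

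For the first equality I would define a map $\phi$ sending each edge $e=K_1K_2$ of $\CliqueGraphOfG$ to the pair $(e,F)$ in the disjoint union, where $F:=\CFGeneratedBy{K_1\cap K_2}$. This is well-defined: since $K_1\cap K_2\subseteq K_i$ we have $K_1,K_2\in F$, so $e$ is an edge inside $\CliqueGraphOfG\InducedBy{F}$, and the identity $\CFGeneratedBy{K_1\cap K_2}=F$ is exactly the condition (equivalently $K_1\cap K_2=\MaxGeneratorOf{F}$ by Proposition~\ref{prop_cliqueEdge_maxGenerator}) placing $e$ in $\SameFamilyGraphOf{F}$. Injectivity is immediate because $\phi$ retains the edge $e$ itself, so distinct edges have distinct images. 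For surjectivity I would use that $\SameFamilyGraphOf{F}$ is by definition a subgraph of $\CliqueGraphOfG\InducedBy{F}$, so any edge of $\SameFamilyGraphOf{F}$ is already an edge of $\CliqueGraphOfG$ whose generated family is $F$, hence its image under $\phi$ is precisely the given pair.

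What remains for the first equality is the disjointness of the union, i.e.\ that each edge lands in exactly one summand. Here I would observe that whenever $K_1,K_2\in F'$ one has $\MaxGeneratorOf{F'}\subseteq K_1\cap K_2$, so by contravariance~\eqref{eq_generation_contravariance} the generated family satisfies $\CFGeneratedBy{K_1\cap K_2}\subseteq F'$. Thus among all families containing both endpoints, the family $F=\CFGeneratedBy{K_1\cap K_2}$ is the smallest; it is the unique $F'$ with $e\in\SameFamilyGraphOf{F'}$, while every strictly larger such $F'$ puts $e$ into $\StrictSubfamilyGraphOf{F'}$ instead. This confirms that $e$ belongs to precisely one $\SameFamilyGraphOf{F'}$, giving the claimed bijection $\phi$.

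For the second equality I would appeal directly to the multigraph contraction $\ContractedBy{\CFEquivalenceRelation}$ from Section~\ref{sec_graphs}: contracting the components of $\StrictSubfamilyGraphOfCF$ (the classes of $\CFEquivalenceRelation$) merges vertices of $\SameFamilyGraphOfCF$ but deletes no edge, so each edge of $\SameFamilyGraphOfCF$ survives as an edge or a loop of $\ContractionGraphOfCF$. This is the natural edge-labelling announced before the statement and establishes the second equality; composed with $\phi$ it identifies the edges of $\CliqueGraphOfG$ with $\biguplus_{F}\ContractionGraphOfCF$. The argument is essentially bookkeeping, so I do not anticipate a serious obstacle; the only point requiring care is the uniqueness of the summand in the previous paragraph, which is exactly where contravariance of generation enters.
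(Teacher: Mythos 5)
Your proposal is correct and follows essentially the same route as the paper: each edge $K_1K_2$ is assigned to the unique family $F=\CFGeneratedBy{K_1\cap K_2}$, with Proposition~\ref{prop_cliqueEdge_maxGenerator} and the definition of $\SameFamilyGraphOfCF$ guaranteeing that this is the only summand containing the edge, and the second equality read off from the edge-preserving nature of the contraction. Your contravariance argument for disjointness is a slightly more explicit justification of what the paper dispatches by noting that the defining condition $\CFGeneratedBy{K_1\cap K_2}=F'$ pins down $F'$ uniquely, but the underlying decomposition is identical.
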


\begin{proof}
For $K_1 K_2\in\CliqueGraphOfG$, consider the clique family $F:=\CFGeneratedBy{K_1\cap K_2}$.
By Proposition~\ref{prop_cliqueEdge_maxGenerator}, we have $K_1\cap K_2=\MaxGeneratorOf{F}$.
The definition of $\SameFamilyGraphOfCF$ allows $K_1 K_2$ only as an edge in $\SameFamilyGraphOfCF$, but not in any other $\SameFamilyGraphOf{F'}$ with $F' \neq F$.
\end{proof}

\begin{theorem}\label{thm_bijection}
There is a bijection between the clique trees $\CliqueTreesOfG$ and a $\CliqueFamiliesOfG$-indexed product of sets of spanning trees.
For each clique tree, its edges and the edges of the spanning trees in its corresponding $\CliqueFamiliesOfG$-indexed collection are in bijection, too.
\begin{equation}\label{eq_bijection}
 \CliqueTreesOfG
 \stackrel{\text{edge-labelling}}{=}
 \prod_{F\in\CliqueFamiliesOfG}
 \SpanningTreesOf{\ContractionGraphOfCF}\,.
\end{equation}
\end{theorem}
\par
A similar bijection to~\eqref{eq_bijection} between the clique trees of a finite chordal graph and a product of trees indexed by the minimal vertex separators of the graph is already known~\cite{Ho_Lee__CountingCliqueTreesAndComputingPerfectEliminationSchemesInParallel__IPL_1989}.
We discuss their relationship in Section~\ref{sec_min_sep}.

\begin{proof}
\par
Using the bijection from Proposition~\ref{prop_edge_partition}, we split the edges of a clique tree $T\in\CliqueTreesOfG$ into disjoint sets $E_F:=\Set{K_1K_2: K_1K_2\in T,K_1K_2\in\SameFamilyGraphOfCF}$, indexed by $\CliqueFamiliesOfG$.
For $F\in\CliqueFamiliesOfG$, statement~\eqref{eq_characterisation_contractedTree} tells us that $E_F$ labels the edges of a spanning tree of $\ContractionGraphOfCF$.
\par
Conversely, select a spanning tree $T_F\in\SpanningTreesOf{\ContractionGraphOfCF}$, for each $F\in\CliqueFamiliesOfG$.
Let $E$ be the union of their edge-labels.
By Proposition~\ref{prop_edge_partition}, each edge in $E$ appears exactly once as an edge-label of some $T_F$.
By~\eqref{eq_characterisation_contractedTree}, the graph $T:=(\CliquesOfG,E)$ is a clique tree.
\end{proof}

\subsection{Enumerating the clique trees}
\label{sec_enumeration}
\par
In this section, we enumerate the clique trees of a given chordal graph.
We start with a structure statement about the auxiliary multigraphs.

\begin{proposition}\label{prop_completenes}
The multigraph $\ContractionGraphOfCF$ is complete.
\end{proposition}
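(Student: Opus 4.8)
The plan is to translate completeness of the multigraph $\ContractionGraphOfCF$ into a statement about the cliques in $F$. Recall that the vertices of $\ContractionGraphOfCF=\SameFamilyGraphOfCF\ContractedBy{\CFEquivalenceRelation}$ are exactly the connected components of $\StrictSubfamilyGraphOfCF$, i.e.\ the $\CFEquivalenceRelation$-classes, and that its edges are, via the edge-labelling, precisely the edges of $\SameFamilyGraphOfCF$. Hence $\ContractionGraphOfCF$ is complete if and only if any two distinct $\CFEquivalenceRelation$-classes are joined by an edge of $\SameFamilyGraphOfCF$; that is, for two cliques lying in different components of $\StrictSubfamilyGraphOfCF$ one can always find representatives $K_1,K_2$ with $\CFGeneratedBy{K_1\cap K_2}=F$.

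The heart of the argument is a dichotomy for a pair of distinct cliques $K_1,K_2\in F$. Since both contain the maximal generator, \eqref{eq_maximalGenerator} gives $\MaxGeneratorOf{F}\subseteq K_1\cap K_2$, and exactly one of two cases occurs. If $K_1\cap K_2=\MaxGeneratorOf{F}$, then $\CFGeneratedBy{K_1\cap K_2}=F$ by \eqref{eq_maxGeneratorRegeneratesCliqueFamily}, so $K_1 K_2$ is an edge of $\SameFamilyGraphOfCF$ whenever $\MaxGeneratorOf{F}\neq\emptyset$. Otherwise $\MaxGeneratorOf{F}\subsetneq K_1\cap K_2$; then $K_1\cap K_2$ is non-empty, and contravariance \eqref{eq_generation_contravariance} yields $\CFGeneratedBy{K_1\cap K_2}\subseteq F$, with the inclusion strict because $K_1\cap K_2$ is not contained in the maximal generator $\MaxGeneratorOf{F}$. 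Thus $K_1 K_2$ is an edge of $\StrictSubfamilyGraphOfCF$, so $K_1$ and $K_2$ lie in the same component. Consequently, two cliques in distinct components of $\StrictSubfamilyGraphOfCF$ must satisfy $K_1\cap K_2=\MaxGeneratorOf{F}$, and each such pair yields a $\SameFamilyGraphOfCF$-edge across the two components as soon as $\MaxGeneratorOf{F}\neq\emptyset$.

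I would then dispose of the degenerate case $\MaxGeneratorOf{F}=\emptyset$, which is the one place the dichotomy does not immediately produce edges. By \eqref{eq_maxGeneratorRegeneratesCliqueFamily} this forces $F=\CFGeneratedBy{\emptyset}=\CliquesOfG$, and then $\SameFamilyGraphOfCF$ has no edges at all, since an edge would require a non-empty intersection equal to $\MaxGeneratorOf{F}=\emptyset$. In this case $\StrictSubfamilyGraphOfCF$ coincides with $\CliqueGraphOfG$, which is connected because $G$ is connected; hence $\StrictSubfamilyGraphOfCF$ has a single component and $\ContractionGraphOfCF$ is a single vertex, trivially complete. Combining the two cases: either $\StrictSubfamilyGraphOfCF$ is connected and $\ContractionGraphOfCF$ is a single (complete) vertex, or it has at least two components, in which case $\MaxGeneratorOf{F}\neq\emptyset$ and the dichotomy supplies a $\SameFamilyGraphOfCF$-edge between every pair of components, making $\ContractionGraphOfCF$ complete.

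The only genuine obstacle I anticipate is the bookkeeping around the degenerate family $F=\CliquesOfG$ with empty maximal generator: one must notice that $\SameFamilyGraphOfCF$ is then edgeless and argue separately that $\StrictSubfamilyGraphOfCF$ is connected, so that completeness holds vacuously. Everything else is a direct consequence of the partition of $\CliqueGraphOfG\InducedBy{F}$ into $\StrictSubfamilyGraphOfCF$ and $\SameFamilyGraphOfCF$ together with the elementary properties \eqref{eq_generation_contravariance}--\eqref{eq_maxGeneratorRegeneratesCliqueFamily} of maximal generators; notably, chordality plays no role in this particular statement.
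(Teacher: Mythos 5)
Your proof is correct and follows essentially the same route as the paper: split on whether $\MaxGeneratorOf{F}$ is empty (in which case $F=\CliquesOfG$, $\StrictSubfamilyGraphOfCF=\CliqueGraphOfG$ is connected, and $\ContractionGraphOfCF$ is a single vertex) or non-empty (in which case every pair of cliques in $F$ intersects in a superset of $\MaxGeneratorOf{F}$, so cross-component edges exist and must lie in $\SameFamilyGraphOfCF$). Your dichotomy merely spells out explicitly the edge-partition step that the paper leaves implicit in the phrase ``$\CliqueGraphOfG\InducedBy{F}$ is complete and so is $\ContractionGraphOfCF$''.
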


\begin{proof}
\par
\underline{Case $\MaxGeneratorOf{F}=\emptyset$}:
This only happens, if $G$ contains disjoint cliques and $F=\CliquesOfG$.
In this case, we have $\StrictSubfamilyGraphOf{\CliquesOfG}=\CliqueGraphOfG$ and $\SameFamilyGraphOf{\CliquesOfG}=(\CliquesOfG,\emptyset)$.
As $\CliquesOfG$ forms one equivalence class under $\CFEquivalenceRelation$, $\ContractionGraphOf{\CliquesOfG}$ is a graph with one vertex and no edges.
\par
\underline{Case $\MaxGeneratorOf{F}\not=\emptyset$}:
This implies that $F$ is finite.
For all distinct $K_1,K_2\in F$,
\begin{equation*}
 \emptyset\not=\MaxGeneratorOf{F}
 =\bigcap_{K\in F} K \subseteq K_1\cap K_2\,.
\end{equation*}
Therefore, $\CliqueGraphOfG\InducedBy{F}$ is complete and so is $\ContractionGraphOfCF$.
\end{proof}

An immediate consequence of~\eqref{eq_bijection} is a count of clique trees of a finite chordal graph.
\begin{equation}
 \Cardinality{\CliqueTreesOfG}
 =\prod_{F\in\CliqueFamiliesOfG}
   \Cardinality{\SpanningTreesOf{\ContractionGraphOfCF}}\,.
\end{equation}
The value of $\Cardinality{\SpanningTreesOf{\ContractionGraphOfCF}}$ is explicitly given in terms of the structure of $\ContractionGraphOfCF$ as a complete multigraph via a \emph{matrix-tree theorem} from~\cite{Ho_Lee__CountingCliqueTreesAndComputingPerfectEliminationSchemesInParallel__IPL_1989}.

\begin{corollary}\label{cor_cliqueTreeEnumeration}
Fix $D\in\NN$.
For every finite chordal graph $G$ with maximal degree $D$ and vertices $V$, one can generate $\CliqueTreesOfG$ sequentially with only $O(\Cardinality{V})$ working memory.
\end{corollary}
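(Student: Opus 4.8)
The plan is to turn the product decomposition of Theorem~\ref{thm_bijection} into a concrete enumeration procedure and to use the bounded-degree hypothesis to control its memory footprint. By Theorem~\ref{thm_bijection}, a clique tree is nothing but a choice of one spanning tree $T_F\in\SpanningTreesOf{\ContractionGraphOfCF}$ for each clique family $F\in\CliqueFamiliesOfG$, and conversely every such tuple assembles into a clique tree by collecting the edge-labels. So I would enumerate the Cartesian product by an \emph{odometer}: keep one counter per factor, emit the clique tree for the current tuple, then advance the counters with carry, exactly as one increments a mixed-radix number. The whole argument then reduces to showing that (a) each individual factor is so small that it can be enumerated in $O(1)$ space, and (b) only $O(\Cardinality{V})$ factors matter, so that the array of counters fits in $O(\Cardinality{V})$ space.

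For (a), I would first record the standard consequences of bounded degree. Every clique has at most $D+1$ vertices, and every vertex lies in at most some constant $c(D)$ cliques (the cliques through a vertex $v$ are among the complete subsets of the closed neighbourhood of $v$, a graph on at most $D+1$ vertices). If $\MaxGeneratorOf{F}\neq\emptyset$, then all cliques of $F$ share a common vertex, so $\Cardinality{F}\le c(D)$; the only family with $\MaxGeneratorOf{F}=\emptyset$ yields a single-vertex $\ContractionGraphOfCF$ by the proof of Proposition~\ref{prop_completenes} and may be ignored. By Proposition~\ref{prop_completenes}, each relevant $\ContractionGraphOfCF$ is therefore a complete multigraph on at most $c(D)$ vertices with at most $\binom{c(D)}{2}$ edges, so its number of spanning trees is bounded by a constant depending only on $D$; such a factor can be listed, and its current choice stored, in $O(1)$ space.

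For (b), I would use the edge partition of Proposition~\ref{prop_edge_partition}. A factor is non-trivial (contributes more than one spanning tree) exactly when $\ContractionGraphOfCF$ has at least two vertices; being complete, it then carries at least one edge, so the corresponding $\SameFamilyGraphOfCF$ is non-empty. Since the edge sets $\SameFamilyGraphOfCF$ partition the edges of $\CliqueGraphOfG$, the number of non-trivial factors is at most the number of edges of $\CliqueGraphOfG$. That clique graph has $O(\Cardinality{V})$ vertices (each vertex lies in at most $c(D)$ cliques, so $G$ has at most $c(D)\Cardinality{V}$ cliques) and bounded degree (a clique meets at most $(D+1)c(D)$ others), hence $O(\Cardinality{V})$ edges; so there are $O(\Cardinality{V})$ non-trivial factors.

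Putting this together, the odometer stores one bounded counter for each of the $O(\Cardinality{V})$ non-trivial factors, and the clique tree currently being emitted has $\Cardinality{\CliquesOfG}-1=O(\Cardinality{V})$ edges, reconstructed from the counters through the edge-labelling of Proposition~\ref{prop_edge_partition}; the auxiliary data (the cliques, the clique graph, and the bounded graphs $\ContractionGraphOfCF$) also fit in $O(\Cardinality{V})$ space, and advancing the odometer never needs more than the current tuple. I expect the main obstacle to be the bookkeeping behind (b): one has to see that bounded degree simultaneously bounds the size of each clique family and, through the disjoint edge partition, the number of families that genuinely branch, and to confirm that translating a tuple of local spanning trees back into clique-graph edges can be carried out on the fly without ever materialising the full list of clique trees.
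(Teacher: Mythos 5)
Your proposal is correct and takes essentially the same route as the paper: both rest on the bijection of Theorem~\ref{thm_bijection}, bound each factor $\SpanningTreesOf{\ContractionGraphOfCF}$ by a constant depending only on $D$, bound the number of relevant factors linearly in $\Cardinality{V}$ (the paper by counting clique families through each vertex, you via the edge partition of Proposition~\ref{prop_edge_partition} --- a cosmetic difference), and enumerate the product in lexicographic (odometer) order while reassembling edges through the edge-labelling. One harmless slip: a two-vertex $\ContractionGraphOfCF$ joined by a single edge has exactly one spanning tree, so ``at least two vertices'' is necessary but not sufficient for a factor to branch; only the direction you actually use (branching implies an edge in $\SameFamilyGraphOfCF$) is true, so the bound stands.
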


\begin{proof}
As the degree is uniformly bounded, so are the sizes of a clique (by $D+1$, with equality if all edges incident to a vertex belong to the same clique), a non-trivial clique family $F$ (by $D$, with equality if all edges incident to a vertex belong to different cliques) and its spanning trees $\SpanningTreesOf{\ContractionGraphOfCF}$ (by $D^{D-2}$ via Cayley's formula).
Furthermore, each vertex is only contained in at most $D$ cliques and hence in at most $D$ clique families, so the size of $\CliqueFamiliesOfG$ is linear in $\Cardinality{V}$.
Generate $\SpanningTreesOf{\ContractionGraphOfCF}$, for all $F\in\CliqueFamiliesOfG$.
This takes memory linear in $\Cardinality{V}$, with worst case multiplicative constants given by the bounds above which depend only on $D$.
Iterate in lexicographic order through all the local choices of spanning trees and use~\eqref{eq_bijection} to obtain a clique tree from a full set of local choices.
\end{proof}

For infinite chordal graphs, there is a dichotomy in the number of clique trees.

\begin{corollary}\label{cor_cliqueTreeCountInfinite}
Let $G$ be an infinite chordal graph.
It has either finitely or $2^{\aleph_0}$ many clique trees.
\end{corollary}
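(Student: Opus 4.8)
The plan is to read the cardinality of $\CliqueTreesOfG$ directly off the bijection of Theorem~\ref{thm_bijection}, which identifies the clique trees with the product $\prod_{F\in\CliqueFamiliesOfG}\SpanningTreesOf{\ContractionGraphOfCF}$. Once I know that every factor is finite and non-empty and that the index set is countable, the statement reduces to a routine piece of cardinal arithmetic for a countable product of finite sets. The only genuinely delicate point is identifying the cardinal that arises in the non-finite case.

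First I would check that each factor $\SpanningTreesOf{\ContractionGraphOfCF}$ is finite and non-empty. There is a unique infinite clique family, namely $\CliquesOfG$, and for it $\MaxGeneratorOf{\CliquesOfG}=\emptyset$: otherwise Proposition~\ref{prop_completenes} would force $\CliquesOfG$ to be finite, contradicting the infinitude of $G$. Exactly as in the first case of the proof of Proposition~\ref{prop_completenes}, $\MaxGeneratorOf{\CliquesOfG}=\emptyset$ makes every clique-graph edge $K_1K_2$ (which has $K_1\cap K_2\neq\emptyset$) an edge of $\StrictSubfamilyGraphOf{\CliquesOfG}$, so $\SameFamilyGraphOf{\CliquesOfG}$ is edgeless and, after contracting the single component of the connected graph $\CliqueGraphOfG$, the multigraph $\ContractionGraphOf{\CliquesOfG}$ is one loopless vertex; its only spanning tree is trivial and this factor is a singleton. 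Every other clique family $F$ has $\MaxGeneratorOf{F}\neq\emptyset$, hence is finite by Proposition~\ref{prop_completenes}, so $\ContractionGraphOfCF$ is a finite complete multigraph and $\SpanningTreesOf{\ContractionGraphOfCF}$ is finite and non-empty.

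Next I would bound the index set. Being connected, locally finite and infinite, $G$ has a countably infinite vertex set $V$ and therefore only countably many finite subsets. Each finite clique family is recovered from its maximal generator by $\CFGeneratedBy{\MaxGeneratorOf{F}}=F$ (equation~\eqref{eq_maxGeneratorRegeneratesCliqueFamily}), so $F\mapsto\MaxGeneratorOf{F}$ injects the finite clique families into the finite subsets of $V$; adding the single infinite family shows $\CliqueFamiliesOfG$ is countable. Now write $S:=\{F:\Cardinality{\SpanningTreesOf{\ContractionGraphOfCF}}\ge 2\}$ for the families offering a genuine choice. If $S$ is finite, then the product collapses to $\prod_{F\in S}\Cardinality{\SpanningTreesOf{\ContractionGraphOfCF}}$, a finite integer, so $G$ has finitely many clique trees. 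If $S$ is infinite---hence countably infinite---the product both contains a copy of $\Set{0,1}^{S}$ (choose two distinct trees in each factor indexed by $S$) and embeds into $\NN^{\CliqueFamiliesOfG}$; both have cardinality $2^{\aleph_0}$, so the product has cardinality exactly $2^{\aleph_0}$.

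The main obstacle is not combinatorial but set-theoretic: the value produced in the non-finite case is $2^{\aleph_0}$, which coincides with the claimed $\aleph_1$ precisely under the Continuum Hypothesis. I would therefore either phrase the dichotomy as ``finitely many or $2^{\aleph_0}$ many'' or invoke CH explicitly; the combinatorial core---finiteness of every factor, countability of the index, and the sandwich estimate---is untouched by this choice.
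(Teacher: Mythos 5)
Your proof is correct and takes essentially the same approach as the paper's: both read the cardinality off the bijection of theorem~\ref{thm_bijection}, use the countability of $\CliqueFamiliesOfG$, and split on whether finitely or infinitely many factors $\SpanningTreesOf{\ContractionGraphOfCF}$ offer more than one choice (your verification that every factor is finite and non-empty, and your sandwich argument pinning down the exact cardinal, are in fact more careful than the paper's proof, which only concludes ``uncountable''). Your closing set-theoretic caveat is justified: the dichotomy should read ``finitely many or $2^{\aleph_0}$ many,'' since the $\aleph_1$ in the paper's statement coincides with $2^{\aleph_0}$ only under the Continuum Hypothesis.
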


\begin{proof}
We look at $\Set{\Cardinality{\SpanningTreesOf{\ContractionGraphOfCF}}}_{F\in\CliqueFamiliesOfG}$.
It is countable, because $\CliqueFamiliesOfG$ is so.
If only a finite number of these numbers are greater than $1$, then the number of clique trees is finite.
If an unbounded number of these numbers are greater than $1$, then there is a countable number of independent choices between more than two spanning trees and the number of clique trees is $2^{\aleph_0}$.
\end{proof}

\subsection{Minimal vertex separators and the reduced clique graph}
\label{sec_min_sep}
\par
As mentioned previously, a bijection indexed by minimal vertex separators and similar to~\eqref{eq_bijection} was given by Ho and Lee~\cite{Ho_Lee__CountingCliqueTreesAndComputingPerfectEliminationSchemesInParallel__IPL_1989}.
Lemma~\ref{lem_min_sep_cf} shows that the minimal vertex separators correspond to the maximal generators of clique families with a non-trivial contribution to the bijection.
As a consequence, the two decompositions coincide.
\par
Following~\cite[Section 2.2]{Blair_Peyton__AnIntroductionToChordalGraphsAndCliqueTrees__IVMA_1993}, we  call $\emptyset\not=W\subseteq V$ a \emph{$v$-$w$-separator}, if $v$ and $w$ lie in different connected components of $G\InducedBy{V\setminus W}$.
We call $\emptyset\not=W\subseteq V$ a \emph{minimal vertex separator}, if there exist vertices $v$ and $w$, such that $W$ is a $v$-$w$-separator and no proper subset of $W$ is a $v$-$w$-separator.
Minimal vertex separators characterise chordal graphs.
The proof of the following result for finite graphs from~\cite{Blair_Peyton__AnIntroductionToChordalGraphsAndCliqueTrees__IVMA_1993} generalises verbatim to infinite graphs.

\begin{theorem}[{\cite[Theorem 2.1]{Blair_Peyton__AnIntroductionToChordalGraphsAndCliqueTrees__IVMA_1993} after~\cite{Dirac__OnRigidCircuitGraphs__Hamburg_1961}}]
\label{thm_min_sep}
A graph is chordal, if and only if every minimal vertex separator is complete.
\end{theorem}
\par
The remainder of this section shows that the minimal vertex separators form a subset of the maximal generators of the clique families.

\begin{lemma}
\label{lem_min_sep_adjacent}
A minimal vertex separator in a chordal graph separates two vertices adjacent to all of it.
\end{lemma}

\begin{proof}
\par
Let $C$ be a minimal $v_1$-$v_2$-separator.
For every $w\in C$, there exists a $w$-$v_1$-path $P_w$ with $P_w\cap C=\Set{w}$.
The path $P_w$ may be assumed to be chordless, i.e.  non-successive vertices are not adjacent.
For each $w\in C$, let $v_1^w$ be the neighbour of $w$ on $P_w$.
Let $V_1:=\Set{v_1^w\mid{}w\in C}\not=\emptyset$.
If we show that one $u_1\in V_1$ fulfils $C\cup\Set{v}\in\CompleteSetsOfG$, then a symmetric argument for a likewise $u_2$ on the $v_2$-side shows that $C$ is a minimal $u_1$-$u_2$-separator.
\par
For each $v\in V_1$, let $C_v:=\Set{w\in C\mid{}vw\in G}$.
In particular, $w\in C_{v_1^w}\neq\emptyset$.
Order $V_1$ by the partial order induced by the subset relation on $\Set{C_v\mid{}v\in V_1}$.
If there exists a unique maximal element $v$ in $V_1$, then, for all $w\in C$, $w\in C_{v_1^w}\subseteq C_v$.
Whence, $C_v=C$ and $C_v\cup\Set{v}\in\CompleteSetsOfG$.
\par
If there exist more than one maximal element in $V_1$, then let $u$ and $v$ be two of them.
This implies that there exist $w_u\in C_u\setminus C_v$ and $w_v\in C_v\setminus C_u$.
Because $w_u$ and $w_v$ lie in $C$, they are connected.
The union of $P_{w_u}$, the $w_u w_v$ edge and $P_{w_v}$ contains an cycle going through $u,w_u,w_v$ and $v$.
As $u$ and $w_v$ are not connected, there must be a chord incident to $w_u$.
Because $P_{w_u}$ is chordless, the other end of the chord must be a vertex  in $P_{w_v}\setminus\Set{w_v,v}$.
Let $z$ be the neighbour of $w_u$ in $P_{w_v}\setminus\Set{w_v,v}$ which lies closest to $v$ (measured along $P_{w_v}$).
Consider the smaller cycle formed by the edges $z w_u$, $w_u w_v$ and $P_{w_v}$ between $w_v$ and $z$.
It contains $z$, $w_u$, $w_v$ and $v$ and has length at least $4$.
But the vertex $w_u$ cannot be incident to a chord, because of the minimality of $z$ and all other vertices lie on the chordless path $P_{w_v}$.
Thus, there cannot be a chord and there cannot be multiple maximal elements of $V_1$.
\end{proof}

\begin{lemma}
\label{lem_min_sep_cf}
A complete set of vertices $C\in\CompleteSetsOfG$ is a minimal vertex separator of $G$, if and only if it is the maximal generator of $\CFGeneratedBy{C}$ and $\ContractionGraphOf{\CFGeneratedBy{C}}$ contains more than one vertex.
\end{lemma}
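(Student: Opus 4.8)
The plan is to prove the two implications of this biconditional by connecting the graph-theoretic notion of a minimal separator to the combinatorial condition involving $\MaxGeneratorOf{\CFGeneratedBy{C}}$ and the vertex count of $\ContractionGraphOf{\CFGeneratedBy{C}}$. First I would unpack what the right-hand side actually says. By~\eqref{eq_maxGeneratorRegeneratesCliqueFamily}, the condition that $C$ is the maximal generator of $\CFGeneratedBy{C}$ just means $C=\MaxGeneratorOf{\CFGeneratedBy{C}}=\bigcap_{K\in\CFGeneratedBy{C}}K$, i.e.\ $C$ is exactly the intersection of all cliques containing it. The condition that $\ContractionGraphOf{\CFGeneratedBy{C}}$ has more than one vertex means, by the definition of $\CFEquivalenceRelation$ as the connected components of $\StrictSubfamilyGraphOf{\CFGeneratedBy{C}}$, that the cliques in $\CFGeneratedBy{C}$ split into at least two $\StrictSubfamilyGraphOf{\CFGeneratedBy{C}}$-components. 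Intuitively, two cliques lie in the same component when their pairwise intersections strictly exceed $C$, so having two components says there exist two cliques $K_1,K_2\supseteq C$ with $K_1\cap K_2=C$ exactly (i.e.\ meeting only in $C$), and no chain of ``larger'' overlaps bridging them.

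For the forward direction, I would assume $C$ is a minimal separator. Since $G$ is chordal, $C$ is complete by Theorem~\ref{thm_min_sep}, so $C\in\CompleteSetsOfG$ and $\CFGeneratedBy{C}$ makes sense. Because $C$ separates two adjacent-to-$C$ vertices into distinct components of $G\InducedBy{V\setminus C}$, I would pick vertices $a,b$ in two distinct components each adjacent to all of $C$ (a standard property of minimal separators), extend $C\cup\Set{a\,}$ and $C\cup\Set{b\,}$ to cliques $K_1,K_2\supseteq C$, and argue that $K_1\cap K_2=C$: any vertex in the intersection beyond $C$ would have to lie on both sides or in $C$, and an extra common neighbour would contradict either minimality of $C$ or the separation. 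This gives $\MaxGeneratorOf{\CFGeneratedBy{C}}=C$ via the sandwich $C\subseteq\MaxGeneratorOf{\CFGeneratedBy{C}}\subseteq K_1\cap K_2=C$, and it places $K_1,K_2$ in different $\StrictSubfamilyGraphOf{\CFGeneratedBy{C}}$-components, yielding at least two vertices in $\ContractionGraphOf{\CFGeneratedBy{C}}$. The delicate point will be showing that $K_1$ and $K_2$ are genuinely in \emph{different} components of $\StrictSubfamilyGraphOf{\CFGeneratedBy{C}}$ rather than merely distinct: I must rule out a path of cliques with successively larger overlaps connecting them, which is where the separation property of $C$ (every path from the $a$-side to the $b$-side meets $C$) does the real work.

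For the converse, I would assume $C=\MaxGeneratorOf{\CFGeneratedBy{C}}$ and that $\ContractionGraphOf{\CFGeneratedBy{C}}$ has at least two vertices, and show $C$ is a minimal separator. The two-component hypothesis hands me cliques $K_1,K_2\supseteq C$ in distinct $\StrictSubfamilyGraphOf{\CFGeneratedBy{C}}$-components with $K_1\cap K_2=C$. I would show first that $C$ separates: picking $a\in K_1\setminus C$ and $b\in K_2\setminus C$, any $a$--$b$ path avoiding $C$ would, via chordality and the subtree/clique-tree structure, force a chain of cliques in $\CFGeneratedBy{C}$ with overlaps strictly containing $C$ linking the two components, contradicting that they are distinct components of $\StrictSubfamilyGraphOf{\CFGeneratedBy{C}}$. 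Then I would show $C$ is minimal: if some proper $C'\subsetneq C$ already separated, then $C'$ (or a complete minimal separator inside it, again by Theorem~\ref{thm_min_sep}) would have the cliques of $\CFGeneratedBy{C}$ intersecting in a set containing $C\supsetneq C'$, contradicting $C=\MaxGeneratorOf{\CFGeneratedBy{C}}=\bigcap_{K\in\CFGeneratedBy{C}}K$.

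I expect the main obstacle to be the careful translation between ``$\StrictSubfamilyGraphOf{\CFGeneratedBy{C}}$-connectivity'' and ``connectivity in $G\InducedBy{V\setminus C}$''. The whole lemma rests on the equivalence between a path of cliques with strictly-larger-than-$C$ consecutive overlaps and a path in $G$ avoiding $C$, and making this precise in both directions — especially in the infinite, locally finite setting where I cannot appeal directly to a finite clique tree but must instead use the existence from Theorem~\ref{thm_existence} and the induced-subtree property~\eqref{eq_cliqueTree_inducedSubTree} — is where the argument will need the most care.
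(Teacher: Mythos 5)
Your forward direction matches the paper's: pick $a,b$ adjacent to all of $C$ in distinct components of $G\InducedBy{V\setminus C}$, extend $C\cup\Set{a}$ and $C\cup\Set{b}$ to cliques $K_1,K_2$, note that any $w\in(K_1\cap K_2)\setminus C$ yields a path $a\,w\,b$ avoiding $C$, and sandwich to get $C=\MaxGeneratorOf{\CFGeneratedBy{C}}$; the ``different components'' claim then only needs the easy half of the correspondence (a chain of cliques with consecutive intersections strictly containing $C$ gives a walk avoiding $C$ through shared vertices), which you correctly identify. The genuine gap is the hard half, which carries the entire converse and which you explicitly defer as ``the main obstacle'' without proving it: that a $v_1$--$v_2$ path in $G\InducedBy{V\setminus C}$ between vertices adjacent to all of $C$ forces a path in $\StrictSubfamilyGraphOf{\CFGeneratedBy{C}}$. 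Your plan to get this from clique trees (theorem~\ref{thm_existence} plus~\eqref{eq_cliqueTree_inducedSubTree}) is only a gesture; those two facts alone do not produce the clique chain --- a workable clique-tree route would need theorem~\ref{thm_bijection} (to find an edge $K_1K_2$ of a clique tree with $K_1\cap K_2=C$) together with the clique intersection property of theorem~\ref{thm_cip}, none of which appears in your text. The paper needs no clique trees at all: it takes a \emph{minimum-length} path $P=w_0\dotso w_n$ in $G\InducedBy{V\setminus C}$ and uses chordality to show every $w_i$ is adjacent to all of $C$ (for $c\in C$, the cycle $c\,w_0\dotso w_n\,c$ can have chords only at $c$ because $P$ is shortest, and repeated chord extraction forces $cw_i\in G$ for all $i$); then each $C\cup\Set{w_i,w_{i+1}}$ is complete, hence extends to a clique $K_{w_iw_{i+1}}\in\CFGeneratedBy{C}$, and consecutive such cliques meet in a superset of $C\cup\Set{w_i}\supsetneq C=\MaxGeneratorOf{\CFGeneratedBy{C}}$, which is exactly an edge of $\StrictSubfamilyGraphOf{\CFGeneratedBy{C}}$. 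Without this argument (or a fully worked-out substitute) your converse is unproven.

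A second, smaller but real defect is your minimality step. You argue that a separator $C'\subsetneq C$ would make ``the cliques of $\CFGeneratedBy{C}$ intersect in a set containing $C\supsetneq C'$, contradicting $C=\MaxGeneratorOf{\CFGeneratedBy{C}}$''. There is no contradiction there: that the cliques of $\CFGeneratedBy{C}$ intersect exactly in $C$ is your standing hypothesis, and a smaller set separating some unrelated pair of vertices of $G$ says nothing about those intersections. In fact, with minimality read globally (minimal among all separators of $G$), the converse implication is false --- glue two triangles along an edge $\Set{s,t}$ and attach a pendant vertex to $s$; then $C=\Set{s,t}$ satisfies the right-hand side, yet $\Set{s}$ is a separator --- so what must be proved is that $C$ is a minimal \emph{$a$--$b$} separator, and that is immediate from what you already have: $a$ and $b$ are adjacent to every vertex of $C$, so for any $c\in C$ the path $a\,c\,b$ avoids $C\setminus\Set{c}$, whence no proper subset of $C$ separates $a$ from $b$. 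Replace your global argument by this observation.
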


\begin{proof}
\par
Let $C$ be a minimal vertex separator.
By Theorem~\ref{thm_min_sep}, $C$ is complete.
By Lemma~\ref{lem_min_sep_adjacent}, $C$ separates $v_1$ and $v_2$ such that $C\uplus\Set{v_1}$ and $C\uplus\Set{v_2}$ are complete.
Hence, there are cliques $K_1, K_2\in\CFGeneratedBy{C}=:F$ with $C\uplus\Set{v_1}\subseteq K_1$ and $C\uplus\Set{v_2}\subseteq K_2$.
\par
It is immediate that $C$ is the maximal generator of $F$, because any generator of $F$ is contained in both $K_1$ and $K_2$.
Thus, a bigger generator would give a common neighbour of $v_1$ and $v_2$ outside of $C$, contradicting the fact that $C$ is a $v_1$-$v_2$-separator.
\par
In order to prove that $\ContractionGraphOfCF$ has at least two vertices, it suffices to show that there is no $K_1$-$K_2$-path in $\StrictSubfamilyGraphOfCF$.
So assume that there was such a path $P$.
For each edge $KK'\in P$, there is a vertex $v_{KK'}\in (K\cap K')\setminus C\not=\emptyset$.
The graph $G\InducedBy{\Set{v_1,v_2}\cup\Set{v_{KK'}\mid{}KK'\in P}}$ contains a $v_1$-$v_2$-path.
This contradicts the vertex separator property of $C$.
\par
For the converse implication, let $F$ be a clique family with $\ContractionGraphOfCF$ having more than two vertices.
Choose two distinct vertices $\CFEquivalenceClassOf{K_1}$ and $\CFEquivalenceClassOf{K_2}$ of $\ContractionGraphOfCF$.
It follows that $K_1\not\CFEquivalenceRelation K_2$, implying $K_1\cap K_2=\MaxGeneratorOf{F}=:C$.
Choose $v_1\in K_1\setminus C$ and $v_2\in K_2\setminus C$.
We claim that $C$ is a minimal $v_1$-$v_2$-separator.
\par
Minimality is obvious, as, for every $v\in C$, $v_1 v v_2$ is a path in $G\InducedBy{V\setminus(C\setminus\Set{v})}$.
It remains to show that $C$ separates $v_1$ and $v_2$.
Assume for a contradiction that there is a $v_1$-$v_2$-path $P$ in $G\InducedBy{V\setminus C}$.
For every $w \in P$, there is a minimal $v_1$-$v_2$-separator containing $C \uplus \Set{w}$.
Because minimal vertex separators are complete, $w$ is connected to all of $C$ and $C \uplus \Set{w}\subseteq{}K_w\in\CFGeneratedBy{C}$.
The sequence $(K_w)_{w\in P}$ contains a $K_1$-$K_2$-path in $\StrictSubfamilyGraphOfCF$, contradicting the original choice of $K_1$ and $K_2$ from different connected components.
Therefore, $C$ is a $v_1$-$v_2$-separator.
\end{proof}

The \emph{reduced clique graph}~\cite{Galinier_Habib_Paul__ChordalGraphsAndTheirCliqueGraphs__LNCS_1995} $\ReducedCliqueGraphOfG$ of $G$ is the subgraph of $\CliqueGraphOfG$ retaining only those edges $K_1 K_2$ with $K_1\cap K_2$ a minimal vertex separator.

\begin{theorem}[{Generalisation of~\cite[Theorem 7]{Galinier_Habib_Paul__ChordalGraphsAndTheirCliqueGraphs__LNCS_1995}}]
\label{thm_reducedCliqueGraph}
The set $\Set{K_1\cap K_2\mid K_1 K_2\in T}$ is an invariant of a clique tree $T\in\CliqueTreesOfG$ and equals the set of minimal vertex separators of $G$.
The union of the clique trees of a chordal graph $G$ is the reduced clique graph $\ReducedCliqueGraphOfG$.
\end{theorem}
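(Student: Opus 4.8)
The plan is to route all three assertions through the edge partition (Proposition~\ref{prop_edge_partition}) and the product decomposition (Theorem~\ref{thm_bijection}), and then to translate the resulting combinatorial conditions into the language of minimal separators via Lemma~\ref{lem_min_sep_cf}. The organising observation is that every edge $K_1 K_2\in\CliqueGraphOfG$ lies in a unique $\SameFamilyGraphOfCF$, namely for $F:=\CFGeneratedBy{K_1\cap K_2}$, where moreover $K_1\cap K_2=\MaxGeneratorOf{F}$ by Proposition~\ref{prop_cliqueEdge_maxGenerator}. Under the contraction to $\ContractionGraphOfCF$, this edge becomes a loop exactly when $K_1\CFEquivalenceRelation K_2$, and a genuine edge otherwise. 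Since a spanning tree never uses a loop, while $\ContractionGraphOfCF$ is complete by Proposition~\ref{prop_completenes} so that every non-loop edge lies in some spanning tree, I would first record the key equivalence: the edge $K_1 K_2$ belongs to some clique tree of $G$ if and only if its image in $\ContractionGraphOf{\CFGeneratedBy{K_1\cap K_2}}$ is not a loop, i.e.\ $K_1\not\CFEquivalenceRelation K_2$.

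For the first two assertions I would fix a clique tree $T$ and split its edges into the pieces $T_F$ along Proposition~\ref{prop_edge_partition}. By~\eqref{eq_characterisation_contractedTree} each $T_F$ is a spanning tree of $\ContractionGraphOfCF$, and it contributes an edge to $T$ exactly when $\ContractionGraphOfCF$ has more than one vertex, since a spanning tree carries an edge precisely when its host has at least two vertices. Hence $\Set{K_1\cap K_2\mid K_1 K_2\in T}$ equals the set of maximal generators $\MaxGeneratorOf{F}$ over those $F\in\CliqueFamiliesOfG$ for which $\ContractionGraphOfCF$ has more than one vertex; this description no longer mentions $T$, which is the claimed invariance. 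To identify this set with the minimal separators I would invoke Lemma~\ref{lem_min_sep_cf} together with $\CFGeneratedBy{\MaxGeneratorOf{F}}=F$ from~\eqref{eq_maxGeneratorRegeneratesCliqueFamily}: applying the lemma to $C:=\MaxGeneratorOf{F}$ gives the forward inclusion, and conversely any minimal separator $C$ satisfies $C=\MaxGeneratorOf{\CFGeneratedBy{C}}$ with $\ContractionGraphOf{\CFGeneratedBy{C}}$ having more than one vertex, so it appears in the set.

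For the third assertion, the key equivalence already identifies the union of all clique trees as the set of edges $K_1 K_2$ with $K_1\not\CFEquivalenceRelation K_2$ in $F=\CFGeneratedBy{K_1\cap K_2}$. It then remains to match this with $\ReducedCliqueGraphOfG$. Here I would reuse the path translation from the proof of Lemma~\ref{lem_min_sep_cf}: a $\StrictSubfamilyGraphOfCF$-path between $K_1$ and $K_2$ produces, through cliques whose pairwise intersections strictly exceed $\MaxGeneratorOf{F}$, a path in $G$ from a vertex of $K_1\setminus\MaxGeneratorOf{F}$ to a vertex of $K_2\setminus\MaxGeneratorOf{F}$ avoiding $\MaxGeneratorOf{F}$, and chordality yields the converse. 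Thus $K_1\CFEquivalenceRelation K_2$ holds if and only if the private parts of $K_1$ and $K_2$ lie in the same component of $G$ after deleting $\MaxGeneratorOf{F}$, so the non-loop status of the edge corresponds exactly to $\MaxGeneratorOf{F}=K_1\cap K_2$ being a minimal separator that genuinely separates $K_1$ from $K_2$, which is the defining property of the edges of $\ReducedCliqueGraphOfG$.

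The hard part will be exactly this last matching, and it is also where the naive reading ``$K_1\cap K_2$ is a minimal separator'' is not on its own sufficient. Even when $\MaxGeneratorOf{F}$ is a minimal separator, so that $\ContractionGraphOfCF$ has more than one vertex, a particular edge of $\SameFamilyGraphOfCF$ may still collapse to a loop because $K_1$ and $K_2$ are linked through cliques with strictly larger pairwise intersections, i.e.\ lie in the same component of $\StrictSubfamilyGraphOfCF$; such an edge is carried by no clique tree despite its minimal-separator intersection. The crux is therefore to establish the precise correspondence between the components of $\StrictSubfamilyGraphOfCF$ and the components, on the vertices covered by $F$, of $G$ with $\MaxGeneratorOf{F}$ removed, so that ``edge in the union'' aligns with ``minimal separator separating the two cliques''. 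Once this correspondence is in hand, the three assertions follow by combining it with the invariance and minimal-separator descriptions obtained above.
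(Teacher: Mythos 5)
Your treatment of the first two assertions is correct and is exactly the paper's (two-sentence) argument made explicit: by proposition~\ref{prop_edge_partition} and theorem~\ref{thm_bijection} a clique tree decomposes into spanning trees of the $\ContractionGraphOfCF$, such a spanning tree carries an edge precisely when $\ContractionGraphOfCF$ has at least two vertices, every such edge is labelled by a pair of cliques meeting exactly in $\MaxGeneratorOf{F}$, and lemma~\ref{lem_min_sep_cf} together with~\eqref{eq_maxGeneratorRegeneratesCliqueFamily} converts this $T$-independent description into the set of minimal separators. Your key equivalence (an edge of $\CliqueGraphOfG$ lies in some clique tree if and only if it is a non-loop of its $\ContractionGraphOfCF$, completeness from proposition~\ref{prop_completenes} supplying the converse) is also sound.

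The difficulty you single out in the third assertion is a genuine gap, and in fact it is a gap in the theorem as literally stated, not merely in the proofs: with the paper's definition of $\ReducedCliqueGraphOfG$ (keep every edge $K_1K_2$ such that $K_1\cap K_2$ is a minimal separator of $G$), the third assertion is false. Take the fan on $\Set{c,x',x,w,z,z',u}$ in which $c$ is adjacent to all other vertices and the remaining edges are $x'x$, $xw$, $wz$, $zz'$; it is chordal, with cliques $K_1:=\Set{c,x',x}$, $L_1:=\Set{c,x,w}$, $L_2:=\Set{c,w,z}$, $K_2:=\Set{c,z,z'}$ and $\Set{c,u}$. The intersection $K_1\cap K_2=\Set{c}$ is a minimal separator (it cuts off $u$), so $K_1K_2$ is an edge of the paper's $\ReducedCliqueGraphOfG$; yet $K_1L_1L_2K_2$ is a path in $\StrictSubfamilyGraphOfCF$ for $F=\CFGeneratedBy{c}$, hence $K_1\CFEquivalenceRelation K_2$, the edge $K_1K_2$ is a loop of $\ContractionGraphOfCF$, and by your key equivalence it lies in no clique tree. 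Concretely, every clique tree consists of the three forced edges $K_1L_1$, $L_1L_2$, $L_2K_2$ plus one of the four edges at $\Set{c,u}$, so the union of the clique trees has seven edges, while the paper's reduced clique graph is complete on the five cliques. The assertion is true for the definition actually used in~\cite{Galinier_Habib_Paul__ChordalGraphsAndTheirCliqueGraphs__LNCS_1995}: keep $K_1K_2$ when $K_1\cap K_2$ separates $K_1\setminus K_2$ from $K_2\setminus K_1$. Your wording ``a minimal separator that genuinely separates $K_1$ from $K_2$'' performs this correction tacitly; it must be made explicit, since it is not the paper's definition. (The same example also shows that ``minimal separator'' has to be read in the standard pairwise sense throughout, e.g.\ in lemma~\ref{lem_min_sep_cf}: the clique-tree label $\Set{c,x}$ properly contains the separator $\Set{c}$, so it is not inclusion-minimal among all separators of $G$.)

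Modulo this change of definition, the correspondence you defer as ``the crux'' does hold, and both directions are available from what you already cite. Along a $\StrictSubfamilyGraphOfCF$-path from $K_1$ to $K_2$ every consecutive intersection strictly contains $\MaxGeneratorOf{F}$, so picking one vertex outside $\MaxGeneratorOf{F}$ from each intersection yields a walk from $K_1\setminus\MaxGeneratorOf{F}$ to $K_2\setminus\MaxGeneratorOf{F}$ avoiding $\MaxGeneratorOf{F}$. Conversely, for a shortest path between vertices of $K_1\setminus\MaxGeneratorOf{F}$ and $K_2\setminus\MaxGeneratorOf{F}$ in $G$ with $\MaxGeneratorOf{F}$ deleted, chordality forces every path vertex to be adjacent to all of $\MaxGeneratorOf{F}$ --- this is verbatim the second paragraph of the proof of lemma~\ref{lem_min_sep_cf} --- and the cliques over its edges, prefixed by $K_1$ and suffixed by $K_2$, form a $\StrictSubfamilyGraphOfCF$-path. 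Inserting this argument completes your proof; unlike the paper's proof, your write-up makes visible the one step where both the argument and the stated theorem need repair.
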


\begin{proof}
The statements are direct consequences of Lemma~\ref{lem_min_sep_cf} together with the bijection in Theorem~\ref{thm_bijection}.
\end{proof}

\section{Classic characterisations of clique trees}
\label{sec_classic_characterisations}
\par
For finite chordal graphs, there exist other characterisations of clique trees besides~\eqref{eq_cliqueTree_inducedSubTree}.
This section generalises or adapts these results to the infinite case.
The characterisations are the clique intersection property in Theorem~\ref{thm_cip}, the running intersection property in Theorem~\ref{thm_rip} and the maximal weight spanning tree property in Theorem~\ref{thm_max_local}.
\par
A tree $T\in\SpanningTreesOf{\CliqueGraphOfG}$ has the \emph{clique intersection property}, if $K_1\cap K_2\subseteq K_3$ holds, for every three cliques $K_1,K_2,K_3$ with $K_3$ lying on the $K_1$-$K_2$-path in $T$.

\begin{theorem}[Generalisation of the finite case in~{\cite[Section 3.1]{Blair_Peyton__AnIntroductionToChordalGraphsAndCliqueTrees__IVMA_1993}}]
\label{thm_cip}
The tree $T\in\SpanningTreesOf{\CliqueGraphOfG}$ is a clique tree, if and only if it fulfils the clique intersection property.
\end{theorem}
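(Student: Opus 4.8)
The plan is to reduce everything to two standard facts about trees. First, since $T$ is a spanning tree of $\CliqueGraphOfG$, it is itself a tree, so any induced subgraph $T\InducedBy{S}$ is automatically a forest; hence the question of whether $T\InducedBy{S}$ is a tree reduces to connectivity. Second, in a tree the path between two vertices is unique, and an induced subgraph $T\InducedBy{S}$ is connected (thus a subtree) if and only if, for every pair of vertices of $S$, the unique $T$-path joining them stays inside $S$. Both the clique-tree condition and the clique intersection property are, under these observations, statements comparing ``paths between two cliques'' with ``membership in a clique family'', so the equivalence should follow by translating one into the other.

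For the forward direction I would assume $T$ is a clique tree and let $K_3$ lie on the $T$-path between $K_1$ and $K_2$. Fix an arbitrary $v\in K_1\cap K_2$; then $K_1,K_2\in\CFGeneratedBy{v}$, and since $T\InducedBy{\CFGeneratedBy{v}}$ is a tree it is a connected subgraph of $T$, so the unique $T$-path between $K_1$ and $K_2$ lies entirely within $\CFGeneratedBy{v}$. In particular $K_3\in\CFGeneratedBy{v}$, i.e.\ $v\in K_3$. As $v$ was arbitrary, this gives $K_1\cap K_2\subseteq K_3$, which is exactly the clique intersection property.

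For the reverse direction I would assume the clique intersection property and fix a vertex $v$. As noted, $T\InducedBy{\CFGeneratedBy{v}}$ is acyclic, so it suffices to prove it is connected. Given $K_1,K_2\in\CFGeneratedBy{v}$ we have $v\in K_1\cap K_2$, and by the clique intersection property every clique $K_3$ on the $T$-path between $K_1$ and $K_2$ satisfies $v\in K_1\cap K_2\subseteq K_3$, so $K_3\in\CFGeneratedBy{v}$. Thus the whole path lies in $\CFGeneratedBy{v}$ and joins $K_1$ to $K_2$ inside $T\InducedBy{\CFGeneratedBy{v}}$, proving connectivity. Hence $T\InducedBy{\CFGeneratedBy{v}}$ is a tree for every $v$, so $T$ is a clique tree.

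The only thing I would take care to verify is that the argument is genuinely insensitive to infiniteness, which is where the generalisation lives: the $T$-path between two vertices of a tree is always a finite sequence of edges, whether or not $T$ is infinite, so the clique intersection property — a condition purely about such finite paths — behaves exactly as it does in the finite case. I therefore do not expect any real obstacle beyond stating the subtree-connectivity characterisation cleanly; in fact local finiteness of $G$ is not needed here, only that $T$ is a tree so that paths are unique and finite.
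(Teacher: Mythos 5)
Your proof is correct, and it takes a genuinely different, more elementary route than the paper's. The paper deduces theorem~\ref{thm_cip} from the local characterisation in theorem~\ref{thm_characterisation}: for the forward direction it applies~\eqref{eq_characterisation_inducedTree} to the clique family $F=\CFGeneratedBy{K_1\cap K_2}$, identifying $K_1\cap K_2$ with $\MaxGeneratorOf{F}$ via proposition~\ref{prop_cliqueEdge_maxGenerator}; for the backward direction it verifies~\eqref{eq_characterisation_inducedTree} on the families generated by edge intersections and on the trivial families, and then invokes theorem~\ref{thm_characterisation} once more. You instead argue directly from definition~\ref{def_cliqueTree}, using in both directions only the single-vertex families $\CFGeneratedBy{v}$ together with uniqueness and finiteness of paths in a (possibly infinite) tree; the inclusion $K_1\cap K_2\subseteq K_3$ is checked one vertex $v\in K_1\cap K_2$ at a time, which is in effect the identity~\eqref{eq_cliqueFamilyViaIntersection}. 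Your route buys self-containedness: no dependence on the machinery of section~\ref{sec_proof_characterisation}, and, as you note, no use of local finiteness. The paper's route is shorter only modulo its main theorem and keeps the statement phrased uniformly in the language of clique families, which is what the surrounding sections exploit.

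A further point in favour of your argument: the paper's backward direction rests on the assertion that $\mathcal{C}=\Set{K_1\cap K_2 \mid K_1 K_2\in\CliqueGraphOfG}$ consists of \emph{all} maximal generators of finite clique families of cardinality at least two, but proposition~\ref{prop_cliqueEdge_maxGenerator} only yields that every element of $\mathcal{C}$ is such a maximal generator, and the converse inclusion can fail. For instance, in the chordal graph on $\Set{x,a,b,c,p,q,r,s}$ whose cliques are $\Set{x,a,b,p}$, $\Set{x,b,c,q}$, $\Set{x,a,c,r}$, $\Set{x,a,b,c}$ and $\Set{p,s}$, the family $\CFGeneratedBy{x}$ has maximal generator $\Set{x}$, yet no two cliques intersect in exactly $\Set{x}$; this family is covered neither by $\mathcal{C}$ nor by the trivial cases, so the paper's proof never certifies that $T\InducedBy{\CFGeneratedBy{x}}$ is a tree, which the conclusion via~\eqref{eq_characterisation_inducedTree} requires. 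The repair inside the paper's framework is easy (run the same path argument for every non-empty complete generator, not only those in $\mathcal{C}$), but your proof, by handling every vertex family directly — exactly what definition~\ref{def_cliqueTree} demands — avoids the issue altogether.
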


\begin{proof}
\par
The clique intersection property is a constraint only if $K_1\cap K_2\not=\emptyset$.
In this case, $\CFGeneratedBy{K_1\cap K_2}=: F$ is a finite clique family.
\par
Assume that $T\in\SpanningTreesOf{\CliqueGraphOfG}$ is a clique tree.
Thus, $T\InducedBy{F}$ is a subgraph of $\CliqueGraphOfG\InducedBy{F}$ and contains the unique $K_1$-$K_2$-path $P$ in $T$.
For every $K_3\in P$, we have $K_3\supsetneq\bigcap_{K'\in F} K' =\MaxGeneratorOf{F}=K_1\cap K_2$.
Thus, $T$ fulfils the clique intersection property.
\par
Assume that $T\in\SpanningTreesOf{\CliqueGraphOfG}$ fulfils the clique intersection property.
It implies that $T\InducedBy{F}$ must be a subgraph of $\CliqueGraphOfG\InducedBy{F}$.
By Proposition~\ref{prop_cliqueEdge_maxGenerator}, the set $\mathcal{C}:=\Set{K_1\cap K_2: K_1 K_2\in\CliqueGraphOfG}$ is the set of maximal generators of all finite clique families of cardinality at least two.
Therefore, $T\InducedBy{\CFGeneratedBy{C}}$ is a tree, for every $C\in\mathcal{C}$.
For the clique families $\CliquesOfG$ and $\Set{K}$, for each clique $K$, $T\InducedBy{F}$ is trivially a tree.
Conclude by~\eqref{eq_characterisation_inducedTree}.
\end{proof}
\par
An enumeration $\Set{K_1,K_2,\dotsc}$ of $\CliquesOfG$ has the \emph{running intersection property}~\cite[(3.1)]{Blair_Peyton__AnIntroductionToChordalGraphsAndCliqueTrees__IVMA_1993} (after~\cite[Condition 3.10]{Beeri_Fagin_Maier_Yannakis__OnTheDesirabilityOfAcyclicDatabaseSchemes__JACM_1983}), if
\begin{equation}\label{eq_rip}
 \forall\, 2\le n\in\NN:\exists\, 1\le i<n:
 \quad
 K_n\cap\bigcup_{j=1}^{n-1} K_j \subseteq K_i\,.
\end{equation}
A tree $T\in\SpanningTreesOf{\CliqueGraphOfG}$ has the \emph{running intersection property}, if there exists an enumeration of $\CliquesOfG$ with the running intersection property such that the $K_nK_i$ (with $i:=i(n)$ as in~\eqref{eq_rip}) are the edges of $T$.

\begin{theorem}[Generalisation of~{\cite[Theorem 3.4]{Blair_Peyton__AnIntroductionToChordalGraphsAndCliqueTrees__IVMA_1993}}]
\label{thm_rip}
The tree $T\in\SpanningTreesOf{\CliqueGraphOfG}$ is a clique tree, if and only if it has the running intersection property.
\end{theorem}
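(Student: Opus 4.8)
The plan is to prove the two implications separately, using the clique intersection property from Theorem~\ref{thm_cip} for one direction and exploiting local finiteness to produce the required global enumeration in the other.

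First I would treat the direction that the running intersection property implies that $T$ is a clique tree. Suppose we are given an enumeration $\Set{K_1,K_2,\dotsc}$ of $\CliquesOfG$ with the running intersection property, together with the choice $n\mapsto i(n)$ whose induced edges $\Set{K_n,K_{i(n)}}$ are exactly the edges of $T$. Since $T\in\SpanningTreesOf{\CliqueGraphOfG}$ is already a tree, by Definition~\ref{def_cliqueTree} it suffices to verify that $T\InducedBy{\CFGeneratedBy{v}}$ is \emph{connected} for every vertex $v$; acyclicity is automatic. The key is a ``parent-closure'' property: if $v\in K_n$ and $v\in K_j$ for some $j<n$, then $v\in K_n\cap\bigcup_{j'<n}K_{j'}\subseteq K_{i(n)}$, so $K_{i(n)}\in\CFGeneratedBy{v}$ as well. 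Let $K_m$ be the clique of least index containing $v$. Starting from any $K_n\in\CFGeneratedBy{v}$ and repeatedly applying $n\mapsto i(n)$ produces a strictly decreasing sequence of indices, each step landing again inside $\CFGeneratedBy{v}$ and bounded below by $m$; it therefore terminates at $K_m$. Hence every clique containing $v$ is joined to $K_m$ by a path inside $T\InducedBy{\CFGeneratedBy{v}}$, which gives connectivity.

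For the converse, assume $T$ is a clique tree and construct a suitable enumeration. I would root $T$ at an arbitrary clique $K_1$ and enumerate the cliques in order of non-decreasing distance from $K_1$, i.e. by breadth-first search. Local finiteness of $\CliqueGraphOfG$ ensures that each distance level is finite, so this is a genuine enumeration of the countable set $\CliquesOfG$. For $n\ge2$ I set $K_{i(n)}$ to be the parent of $K_n$, the neighbour of $K_n$ on the $T$-path towards $K_1$; these pointers reproduce precisely the edge set of $T$. To verify the running intersection property, fix $n$ and a vertex $v\in K_n\cap\bigcup_{j<n}K_j$, say $v\in K_j$ with $j<n$. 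Because the breadth-first order is non-decreasing in distance from the root, $K_j$ cannot be a descendant of $K_n$, so the unique $T$-path from $K_n$ to $K_j$ leaves $K_n$ through its parent $K_{i(n)}$. The clique intersection property (Theorem~\ref{thm_cip}) then yields $K_n\cap K_j\subseteq K_{i(n)}$, so $v\in K_{i(n)}$; as $v$ was arbitrary this gives $K_n\cap\bigcup_{j<n}K_j\subseteq K_{i(n)}$, which is exactly~\eqref{eq_rip}.

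The main obstacle is precisely the infinite setting in the converse direction: the running intersection property demands a single global ordering of \emph{all} cliques, so one must simultaneously exhaust the countably infinite clique set and guarantee that each clique's designated predecessor sits on the root side of the tree. Breadth-first ordering resolves both points at once, since local finiteness makes every distance level finite, and the descendant argument pins down that the path from $K_n$ to any earlier $K_j$ runs through the parent $K_{i(n)}$, which is what allows the clique intersection property to conclude.
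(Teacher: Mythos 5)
Your proof is correct, and it is considerably more explicit than the paper's, which disposes of the theorem in two sentences: it asserts that the finite-case proof of Blair and Peyton (an equivalence between the running intersection property and the clique intersection property) ``generalises without modification'', and only in a remark after the theorem points out that the enumeration attached to a clique tree comes from a rooted, level-by-level traversal. Your converse direction is exactly that intended argument written out in full: a breadth-first enumeration from an arbitrary root (a genuine enumeration of order type $\omega$, because local finiteness of $\CliqueGraphOfG$ makes each distance level finite), parent pointers $i(n)$ reproducing the edge set of $T$, and theorem~\ref{thm_cip} applied to the $T$-path from $K_n$ to an earlier $K_j$, which must leave $K_n$ through the parent $K_{i(n)}$, giving $K_n\cap K_j\subseteq K_{i(n)}$. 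Your forward direction, by contrast, takes a different route from the one the paper cites: rather than deriving the clique intersection property from the running intersection property and then invoking theorem~\ref{thm_cip}, you verify definition~\ref{def_cliqueTree} directly --- acyclicity of $T\InducedBy{\CFGeneratedBy{v}}$ is automatic in a subgraph of a tree, and connectivity follows from your parent-descent argument: from any $K_n\in\CFGeneratedBy{v}$ the pointers $n\mapsto i(n)$ stay inside $\CFGeneratedBy{v}$, strictly decrease, and are bounded below by the least index of a clique containing $v$, where the descent terminates. What this buys is a self-contained proof whose only external input is theorem~\ref{thm_cip}, used in one direction only, and in which the one place the infinite setting genuinely matters --- manufacturing an ordering of all of $\CliquesOfG$ in which each clique's designated predecessor appears earlier --- is isolated and resolved by local finiteness, a point the paper leaves implicit in the citation.
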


\begin{proof}
The proof of the finite case~\cite[Theorem 3.4]{Blair_Peyton__AnIntroductionToChordalGraphsAndCliqueTrees__IVMA_1993} shows the equivalence to the clique intersection property.
Thus, it generalises without modification to the infinite case.
\end{proof}
\par
For $T\in\CliqueTreesOfG$, one obtains an enumeration of $\CliquesOfG$ by fixing a root, starting with it, then enumerating all its children, then their children in turn and so on recursively.
\par
To a spanning tree $T\in\SpanningTreesOf{\CliqueGraphOfG}$ of the clique graph of a finite graph $G$ assign the weight $w(T):=\sum_{K_1 K_2\in T} \Cardinality{K_1\cap K_2}$.
The \emph{maximal weight spanning tree property} is another classic characterisation of finite clique trees.

\begin{theorem}[{\cite[Theorem 3.5]{Blair_Peyton__AnIntroductionToChordalGraphsAndCliqueTrees__IVMA_1993} after~\cite{Bernstein__PowerofNaturalSeminjoings__SIAMJComp_1981}}]
\label{thm_max_finite}
Let $G$ be a finite chordal graph.
The spanning tree $T\in\SpanningTreesOf{\CliqueGraphOfG}$ is a clique tree, if and only if $T$ has maximal weight with respect to $w$, that is
\begin{equation}\label{eq_max_finite}
 T\in\Argmax\Set{w(S) \mid S\in\SpanningTreesOf{\CliqueGraphOfG}}
 \,.
\end{equation}
\end{theorem}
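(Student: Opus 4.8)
The plan is to reduce the maximum-weight condition to the defining property of a clique tree through a double-counting identity. The central observation is that the total weight of a spanning tree splits as a sum over the vertices of $G$. Writing $\NumEdges{\cdot}$ for the number of edges of a graph, I would first establish
\begin{equation*}
 w(T)
 =\sum_{K_1 K_2\in T}\Cardinality{K_1\cap K_2}
 =\sum_{v\in V}\NumEdges{T\InducedBy{\CFGeneratedBy{v}}}\,,
\end{equation*}
obtained by exchanging the order of summation: an edge $K_1 K_2$ of $T$ contributes $1$ to the term for $v$ exactly when $v\in K_1\cap K_2$, that is, when $K_1,K_2\in\CFGeneratedBy{v}$, which is precisely the condition for this edge to lie in the induced subgraph $T\InducedBy{\CFGeneratedBy{v}}$. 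Finiteness of $G$ ensures all these sums are finite.

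Next I would bound each summand. Since $T$ is a tree, $T\InducedBy{\CFGeneratedBy{v}}$ is a forest on the finite vertex set $\CFGeneratedBy{v}$, so it has at most $\Cardinality{\CFGeneratedBy{v}}-1$ edges, with equality if and only if it is connected. Summing over $v$ gives the uniform upper bound
\begin{equation*}
 w(T)\le\sum_{v\in V}\left(\Cardinality{\CFGeneratedBy{v}}-1\right)\,,
\end{equation*}
valid for every $T\in\SpanningTreesOf{\CliqueGraphOfG}$, and this bound is attained exactly when $T\InducedBy{\CFGeneratedBy{v}}$ is a tree for every vertex $v$. By Definition~\ref{def_cliqueTree}, this is precisely condition~\eqref{eq_cliqueTree_inducedSubTree} defining a clique tree.

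Finally I would invoke the existence of a clique tree in the finite setting, guaranteed by the classical combinatorial representation recorded around~\eqref{eq_cliqueTree_inducedSubTree}, to conclude that the right-hand side above is actually realised by some spanning tree. Consequently the maximum of $w$ over $\SpanningTreesOf{\CliqueGraphOfG}$ equals $\sum_{v\in V}(\Cardinality{\CFGeneratedBy{v}}-1)$, and a spanning tree attains it if and only if every $T\InducedBy{\CFGeneratedBy{v}}$ is a tree, i.e., if and only if $T$ is a clique tree. This is exactly condition~\eqref{eq_max_finite}.

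The step I expect to require the most care is the forest bound together with its equality case. The key point, which makes the argument clean, is that $T\InducedBy{\CFGeneratedBy{v}}$ is automatically acyclic, being an induced subgraph of the tree $T$; hence the only way to drop below $\Cardinality{\CFGeneratedBy{v}}-1$ edges is through disconnectedness, and the inequality becomes an equality precisely at connectedness. Everything else is bookkeeping, and the restriction to a finite $G$ guarantees that the maximum over spanning trees and all the sums are well defined.
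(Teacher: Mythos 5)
Your proof is correct, and it is worth noting that the paper itself gives no proof of theorem~\ref{thm_max_finite}: the result is quoted from the literature (Blair--Peyton, after Bernstein--Goodman), and the paper only proves the local variant, theorem~\ref{thm_maxWeight}, via its clique-family machinery. Your argument therefore supplies a self-contained proof rather than duplicating one, and it is essentially the classical argument from the cited reference: the double-counting identity $w(T)=\sum_{v\in V}\NumEdges{T\InducedBy{\CFGeneratedBy{v}}}$, the forest bound $\NumEdges{T\InducedBy{\CFGeneratedBy{v}}}\le\Cardinality{\CFGeneratedBy{v}}-1$ with equality exactly at connectedness, and the appeal to the existence of a clique tree for finite chordal graphs to see that the bound $\sum_{v\in V}\bigl(\Cardinality{\CFGeneratedBy{v}}-1\bigr)$ is attained. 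That last step is essential and you handle it correctly: without it, maximum-weight spanning trees need not attain your upper bound, and indeed this is the only place where chordality enters (for a non-chordal graph the bound is simply never achieved). Compared to the paper's toolkit, your decomposition of the weight is indexed by vertices, whereas the paper's edge partition (proposition~\ref{prop_edge_partition}) is indexed by clique families; both localise $w$, and the per-summand equality structure in your argument is precisely the mechanism the paper exploits to formulate and prove theorem~\ref{thm_maxWeight}, so your proof fits naturally alongside, and could even be cited as motivation for, that local version.
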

\par
Condition~\eqref{eq_max_finite} makes no sense in the infinite case.
A local version holds, though.

\begin{theorem}
\label{thm_max_local}
Let $G$ be a chordal graph.
The spanning tree $T\in\SpanningTreesOf{\CliqueGraphOfG}$ is a clique tree, if and only if
\begin{equation}\label{eq_maximality_local}
 \forall F\in\CliqueFamiliesOfG, \Cardinality{F}<\infty:\quad
 T\InducedBy{F}\in
 \Argmax\Set{w(S) \mid
  S\in\SpanningTreesOf{\CliqueGraphOfG\InducedBy{F}}
 }\,.
\end{equation}
\end{theorem}

\begin{proof}
\par
We show the equivalence between~\eqref{eq_maximality_local} and~\eqref{eq_characterisation_contractedTree} by induction over the size of the maximal generator of a clique family.
The minimal clique families are $F(K)=\Set{K}$, for a clique $K$, and the equivalence holds trivially, as $\ContractionGraphOf{\Set{K}}$ contains only a single vertex $\Set{K}$ and no edges.
Suppose that $F$ has minimal cardinality and violates the equivalence.
Split the sum $w(T\InducedBy{F})$ into two parts.
The first part is a sum over edges in $\StrictSubfamilyGraphOfCF$.
By the minimality of $F$, the equivalence holds for all strict subfamilies of $F$ and this sum is a constant.
The second part is a sum over the edges in $\SameFamilyGraphOfCF$.
All edges in $\SameFamilyGraphOfCF$ have the same weight $\Cardinality{\MaxGeneratorOf{F}}$.
Hence, the equivalence between maximality of the second sum and the subgraph of $\ContractionGraphOfCF$ induced by the edge-labels of $T$ being spanning is obvious.
\end{proof}

\section*{Acknowledgements}
\par
The authors thank the anonymous referees for their detailed and constructive feedback on this work.
Christoph Hofer-Temmel acknowledges the support of the VIDI project ''Phase transitions, Euclidean fields and random fractals'', NWO 639.032.916.
Florian Lehner acknowledges the support of the Austrian Science Fund (FWF), project W1230-N13.

\bibliographystyle{plainnat}
\bibliography{ref}

\end{document}